\newcommand{\W}{\mathcal{W}}
\newcommand{\R}{\mathbb{R}}
\newcommand{\N}{\mathbb{N}}
\newcommand{\E}{\mathbb{E}}
\DeclareMathOperator{\co}{\operatorname{co}}
\renewcommand{\epsilon}{\varepsilon}
\numberwithin{equation}{section}
\theoremstyle{plain}
\newtheorem{prooff}{Proof}[section]
\newtheorem{lemma}[prooff]{Lemma}
\newtheorem{theorem}[prooff]{Theorem}
\newtheorem{proposition}[prooff]{Proposition}
\theoremstyle{definition}
\newtheorem{example}[prooff]{Example}
\begin{document}

\title[Lipschitz continuity of the Wasserstein projection]{Lipschitz continuity of the Wasserstein projections in the convex order on the line}

\author{Benjamin Jourdain}
\thanks{CERMICS, Ecole des Ponts, INRIA, Marne-la-Vallée, \href{mailto:benjamin.jourdain@enpc.fr}{benjamin.jourdain@enpc.fr}}
\author{William Margheriti}
\author{Gudmund Pammer}
\thanks{Department of Mathematics, ETH Zürich, \href{mailto:gudmund.pammer@math.ethz.ch}{gudmund.pammer@math.ethz.ch}.}

\maketitle
\begin{abstract}
	Wasserstein projections in the convex order were first considered in the framework of weak optimal transport, and found application in various problems such as concentration inequalities and martingale optimal transport.
	In dimension one, it is well-known that the set of probability measures with a given mean is a lattice w.r.t. the convex order.
	Our main result is that, contrary to the minimum and maximum in the convex order, the Wasserstein projections are Lipschitz continuity w.r.t.\ the Wasserstein distance in dimension one.
	Moreover, we provide examples that show sharpness of the obtained bounds for the Wasserstein distance with index $1$.
\end{abstract}

{\bf Keywords:} Optimal transport, Weak optimal transport, Projection, Convex order.

\section{Introduction and main result}
Motivated by the restauration of the convex order between discrete approximations of two probability measures on $\R^d$, Alfonsi, Corbetta and one of the authors \cite{AlCoJo20} introduced for $p\in[1,+\infty)$ the Wasserstein projections in the convex order:
\begin{align}
	\label{eq:def.Wasserstein_projection.I}
	\mathcal I_p(\mu,\nu) :=& \arg \min \left\{ \W_p(\mu,\eta) \colon \eta \le_c \nu \right\}, \\
	\label{eq:def.Wasserstein_projection.J}
	\mathcal J_p(\mu,\nu) :=& \arg \min \left\{ \W_p(\eta,\nu) \colon \mu \le_c \eta \right\}.
\end{align}
Here $\W_p$ denotes the celebrated Wasserstein distance on the set $\mathcal P_p(\R^d)$ of Borel probability measures on $\R^d$ with finite $p$-th moment:
\begin{equation}
	\label{eq:def.Wasserstein_distance}
	\textstyle
	\W_p^p (\mu,\nu) := \inf_{ \pi \in \Pi(\mu,\nu)} \int_{\R^d \times \R^d} |x - y|^p \pi(dx,dy)\quad
	\mu, \nu \in \mathcal P_p(\R^d),
\end{equation}
where we write $\Pi(\mu,\nu)$ for the subset of couplings $\pi \in \mathcal P_p(\R^d \times \R^d)$ with first marginal $\mu$ and second marginal $\nu$. Moreover, the inequality $\mu \le_c \nu$ means that $\mu$ is smaller than $\nu$ in the convex order, i.e., for all convex $f \colon \R^d \to \R$
\begin{equation}
	\label{eq:def.convex_order}
	\textstyle
	\int_{\R^d} f(x) \, \mu(dx) \le \int_{\R^d} f(x) \, \nu(dy).
\end{equation}
When $d = 1$, the projections \eqref{eq:def.Wasserstein_projection.I} and \eqref{eq:def.Wasserstein_projection.J} enjoy additional properties according to \cite{AlCoJo20}: the probability measures $\mathcal I(\mu,\nu)$ resp.\ $\mathcal J(\mu,\nu)$ defined in \eqref{eq:quantileProjections} below are (when $p>1$ unique) optimizers of \eqref{eq:def.Wasserstein_projection.I} resp.\ \eqref{eq:def.Wasserstein_projection.J} for all $p\ge 1$. Our main result is the following Lipschitz continuity property of $\mathcal I$ and $\mathcal J$.
\begin{theorem}[Lipschitz continuity]\label{thm:main_result}
	When $d=1$ and $p \in [1,\infty)$, the Wasserstein projections $\mathcal I$, $\mathcal J$ are Lipschitz continuous.
	For  $\mu,\nu,\mu',\nu'\in\mathcal P_p(\R)$, we have
	\begin{align}
		\label{eq:I_Lipschitz}
		\mathcal W_p(\mathcal I(\mu,\nu),\mathcal I(\mu',\nu'))&\le 2\mathcal W_p(\mu,\mu')+\phantom{2}\mathcal W_p(\nu,\nu'),\\
		\label{eq:J_Lipschitz}
		\mathcal W_p(\mathcal J(\mu,\nu),\mathcal J(\mu',\nu'))&\le\phantom{2}\mathcal W_p(\mu,\mu')+2\mathcal W_p(\nu,\nu').
	\end{align}
\end{theorem}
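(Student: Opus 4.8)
The plan is to pass to quantile functions and use the explicit description of the minimizers from \cite{AlCoJo20}. Write $G_\alpha(u)=\int_0^uF^{-1}_\alpha(t)\,dt$ for the convex potential of $\alpha\in\mathcal P_p(\R)$, so that $G_\alpha(0)=0$ and $G_\alpha(1)=m_\alpha:=\int x\,\alpha(dx)$. By \eqref{eq:quantileProjections}, the potential $G_{\mathcal I(\mu,\nu)}$ is given by an explicit obstacle/convex‑hull construction applied to the pair consisting of $G_\mu$ shifted by the linear function $(m_\nu-m_\mu)\,\mathrm{id}$ (so that the candidate carries the mean of $\nu$) and of $G_\nu$, and $G_{\mathcal J(\mu,\nu)}$ by the same construction with $\mu$ and $\nu$ exchanged; moreover $F^{-1}_{\mathcal I(\mu,\nu)}=(G_{\mathcal I(\mu,\nu)})'$ and $F^{-1}_{\mathcal J(\mu,\nu)}=(G_{\mathcal J(\mu,\nu)})'$, and these are the optimizers for all $p$ simultaneously. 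Since $\mathcal W_p(\alpha,\beta)=\|F^{-1}_\alpha-F^{-1}_\beta\|_{L^p(0,1)}$ in dimension one, \eqref{eq:I_Lipschitz}–\eqref{eq:J_Lipschitz} reduce to $L^p$‑Lipschitz bounds for the maps $(F^{-1}_\mu,F^{-1}_\nu)\mapsto F^{-1}_{\mathcal I(\mu,\nu)}$ and $\mapsto F^{-1}_{\mathcal J(\mu,\nu)}$; by the $\mu\leftrightarrow\nu$, $\mathcal I\leftrightarrow\mathcal J$ symmetry it suffices to prove \eqref{eq:I_Lipschitz}.

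Next I would split, via the triangle inequality,
\[
	\mathcal W_p\big(\mathcal I(\mu,\nu),\mathcal I(\mu',\nu')\big)\le
	\mathcal W_p\big(\mathcal I(\mu,\nu),\mathcal I(\mu',\nu)\big)+\mathcal W_p\big(\mathcal I(\mu',\nu),\mathcal I(\mu',\nu')\big),
\]
and bound the two terms by $2\,\mathcal W_p(\mu,\mu')$ and $\mathcal W_p(\nu,\nu')$ respectively. For the first term, $\nu$ is frozen, so $\mathcal I(\cdot,\nu)$ is the $\mathcal W_p$‑projection onto the fixed convex set $\{\eta\le_c\nu\}$; since every element of this set has mean $m_\nu$, the formula gives $\mathcal I(\mu,\nu)=\mathcal I(\widehat\mu,\nu)$ where $\widehat\mu$ is $\mu$ translated to the mean $m_\nu$, and $\mathcal W_p(\widehat\mu,\widehat{\mu'})\le\mathcal W_p(\mu,\mu')+|m_\mu-m_{\mu'}|\le 2\,\mathcal W_p(\mu,\mu')$ by Jensen's inequality. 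It remains to show that, restricted to measures of a given mean, $\mathcal I(\cdot,\nu)$ is $\mathcal W_p$‑non‑expansive — which, contrary to the Hilbertian case $p=2$, is not a consequence of abstract convexity of the constraint set but of the rigid structure of the minimizer dictated by the explicit formula, allowing the two potentials (hence their derivatives) to be compared on a suitable decomposition of $(0,1)$ into contact and non‑contact regions. For the second term, $\mu'$ is frozen and only the obstacle $G_\nu$ varies; then $F^{-1}_{\mathcal I(\mu',\nu)}-F^{-1}_{\mathcal I(\mu',\nu')}$ is the derivative of the difference of two convex potentials whose endpoint values agree up to $m_\nu-m_{\nu'}$, and a case analysis over the contact/non‑contact regions of the two minimizers — using convexity of all the potentials involved and the fact that the two linear mean corrections agree at the relevant endpoints — produces the \emph{sharp} bound $\|F^{-1}_\nu-F^{-1}_{\nu'}\|_{L^p}=\mathcal W_p(\nu,\nu')$, with no loss of constant. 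Adding the two estimates yields \eqref{eq:I_Lipschitz}, and the symmetric argument — the convex‑hull construction now acting on the $\nu$‑side — gives \eqref{eq:J_Lipschitz} with the constants $(1,2)$.

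The hard part is the $\nu$‑perturbation estimate with optimal constant $1$: a crude bound on the Hausdorff‑type displacement of $\{\eta\le_c\nu\}$ as $\nu$ moves, combined with stability of projections, would lose a factor, so one genuinely has to control the $L^p$ norm of the derivative of the solution of an obstacle problem as the obstacle is perturbed — and it is precisely here that the authors' $p=1$ examples show the constants $2$ and $1$ to be optimal. A further technical nuisance is that the quantile maps above, while continuous, are not differentiable and the relevant contact sets need not be regular, so all comparisons have to be carried out at the level of monotone functions and their primitives rather than through any Euler–Lagrange formulation.
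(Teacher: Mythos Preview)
Your outline correctly identifies the quantile-function framework, the explicit formula \eqref{eq:quantileProjections}, and the fact that the crux is an $L^p$ contraction estimate for the derivative of the convex-hull construction as the data are perturbed. But the proposal is not a proof: both of your two key claims are simply asserted.

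The second claim (the $\nu$-perturbation with constant $1$) is, once unpacked via \eqref{eq:quantileProjections}, exactly the statement that the convex-hull operation $H\mapsto\co(H)$ is non-expansive in the homogeneous $\dot W^{1,p}$ seminorm: $\|\partial_+(\co F-\co G)\|_p\le\|\partial_+F-\partial_+G\|_p$. This is the paper's Proposition~\ref{prop:convex_hull_contracting}, and its proof occupies most of Section~2: one reduces to piecewise-affine $F$, $G$ (Lemma~\ref{lem:cadlag_approx}) and runs an induction in which one affine piece is appended at each step; the inductive step (Lemma~\ref{lem:convex_affine}) is itself a nontrivial case analysis using a convexity estimate and an increasing-convex-order comparison (Lemmas~\ref{lem:convex_est} and~\ref{lem:quantile Function increasing ordering}). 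Your ``case analysis over contact/non-contact regions'' points in this direction but supplies none of the argument; this is where essentially all the work lies.

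Your treatment of the first term introduces an additional, unnecessary difficulty. You reduce it to showing that $\mathcal I(\cdot,\nu)$ is $\mathcal W_p$-non-expansive on measures with mean $m_\nu$; in quantile language this is the claim that $G\mapsto G-\co(G)$ (not $G\mapsto\co(G)$) is $\dot W^{1,p}$-non-expansive, which is a \emph{different} inequality from Proposition~\ref{prop:convex_hull_contracting} and which you neither prove nor reduce to it. The paper avoids this entirely: it does not split into a $\mu$-step and a $\nu$-step. It applies Proposition~\ref{prop:convex_hull_contracting} once, to $G(v)=\int_0^v(F_\mu^{-1}-F_\nu^{-1})$ versus $G'(v)=\int_0^v(F_{\mu'}^{-1}-F_{\nu'}^{-1})$, and then the triangle inequality on $F_{\mathcal I(\mu,\nu)}^{-1}=F_\mu^{-1}-\partial_+\co(G)$ directly gives
\[
\mathcal W_p(\mathcal I(\mu,\nu),\mathcal I(\mu',\nu'))\le \|F_\mu^{-1}-F_{\mu'}^{-1}\|_p+\|\partial_+(\co G-\co G')\|_p\le 2\mathcal W_p(\mu,\mu')+\mathcal W_p(\nu,\nu'),
\]
the factor $2$ arising because $\mu$ enters both in $F_\mu^{-1}$ and in $G$. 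So once you have the convex-hull contraction, no separate non-expansiveness argument for the $\mu$-variation is needed.
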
%
Theorem \ref{thm:main_result} generalises \cite[Proposition 3.1 and Proposition 4.3]{AlCoJo20} when $d = 1$.
These propositions state that for any dimension $d \in \N$ and probabilities $\mu,\nu,\mu',\nu'\in\mathcal P_p(\R^d)$ with $\mu\le_c\nu$, so that $\mathcal I_p(\mu,\nu)=\mu$ and $\mathcal J_p(\mu,\nu)=\nu$, we have
\begin{align*}
	\mathcal W_p(\mathcal I_p(\mu,\nu),\mathcal I_p(\mu',\nu'))&\le 2\mathcal W_p(\mu,\mu')+\phantom{2}\mathcal W_p(\nu,\nu'),\\
	\mathcal W_p(\mathcal J_p(\mu,\nu),\mathcal J_p(\mu',\nu'))&\le\phantom{2}\mathcal W_p(\mu,\mu')+2\mathcal W_p(\nu,\nu').
\end{align*}
Hence, by Theorem \ref{thm:main_result} it is possible (for $d=1$) to drop the convex ordering constraint $\mu\le_c\nu$.
The extension of Theorem \ref{thm:main_result} to dimensions $d > 1$ is to the authors' understanding an interesting open question.

\subsection{Discussion on Wasserstein projections and related problems}
Gozlan, Roberto, Samson, and Tetali \cite{GoRoSaTe17} introduced a generalization of optimal transport, the weak optimal transport, in order to study measure concentration inequalities.
The following barycentric weak optimal transport problem received in recent years special attention, see for example \cite{GoRoSaTe17,GoRoSaSh18,GoJu18,AlCoJo19,AlCoJo20,BaBePa18,BaBePa19}:
for $\mu, \nu \in \mathcal P_p(\R^d)$, consider
\begin{equation}\label{eq:def_WOT}
	\textstyle
	\mathcal V_p^p(\mu,\nu) := \inf_{\pi \in \Pi(\mu,\nu)} \int_{\R^d} \left| x - \int_{\R^d} y \, \pi_x(dy) \right|^p \, \mu(dx),
\end{equation}
where we write $(\pi_x)_{x \in \R^d}$ for a disintegration kernel of $\pi$ w.r.t.\ its $\mu$-marginal: $\pi(dx,dy)=\mu(dx)\pi_x(dy)$.
This barycentric weak optimal transport problem has an intrinsic connection with the problem of finding Wasserstein projection.
Indeed, we have that the values of $\mathcal V_p(\mu,\nu)$ and $\W_p(\mu,\mathcal I_p(\mu,\nu))$ coincide, see \cite{AlCoJo20,BaBePa18}.
Moreover, if $\pi^\ast$ is an optimizer of \eqref{eq:def_WOT} then the image of the first marginal $\mu$ under the map $x \mapsto \int_{\R^d} \pi_x^\ast(y)\,dy$ is a minimizer of \eqref{eq:def.Wasserstein_projection.I} and coincides with $\mathcal I_p(\mu,\nu)$ when $p > 1$.
Therefore, when $\mu, \nu \in \mathcal P_p(\R^d)$ are finitely supported, \eqref{eq:def_WOT} can be used to compute the Wasserstein projection.
In particular, ${\mathcal I}_2(\mu,\nu)$ can be computed by solving a quadratic optimization problem with linear constraints.
We refer to \cite{GoRoSaTe17,AlBoCh18} for dual formulations of weak optimal transport problems with additional martingale constraints, to \cite{BaBePa18} for the existence of optimal couplings and necessary and sufficient optimality conditions, to \cite{BaPa19} for continuity of their value function in terms of the marginal distributions $\mu$ and $\nu$, and to \cite{BaPa20} for applications of such problems.
We point out the connection of Wasserstein projections to Cafarelli's contraction theorem that was discovered in \cite{FaGoPr20}.
Note that dual formulations of the minimization problems defining $\mathcal I_p(\mu,\nu)$ and $\mathcal J_p(\mu,\nu)$ have recently been studied by Kim and Ruan \cite{KimRuan}.

\subsection{Wasserstein projection in dimension one}
Wasserstein projections in the special case $d=1$, were further studied in \cite{AlCoJo20} by means of quantile functions.
The quantile function of a probability measure $\mu$ on $\R$ is the left-continuous pseudo-inverse of its cumulative distribution function:
\[
\forall u\in (0,1),\;	F_\mu^{-1}(u):=\inf\{x\in\R:\mu((-\infty,x])\ge u\}.
\]
It is well-known that the comonotonous coupling is an optimizer in \eqref{eq:def.Wasserstein_distance}:
\begin{equation}\textstyle
 \forall \mu,\nu\in\mathcal P_p(\R),\;\W_p(\mu,\nu)=\textstyle{\left(\int_0^1|F_\mu^{-1}(u)-F_\nu^{-1}(u)|^pdu\right)^{1/p}}=:\|F_\mu^{-1}-F_\nu^{-1}\|_p.
  \label{eq:wass1d}
\end{equation} 
Moreover, by \cite[Theorem 3.A.5]{ShSh07} the convex order can be characterised in terms of quantile functions:
for $\mu,\nu \in \mathcal P_1(\R)$ that share the same barycentre we have
\begin{equation}\label{eq:caracc1d}
	\textstyle
   \mu\le_c \nu\iff \forall u\in[0,1],\;\int_0^uF_\mu^{-1}(v)\,dv\ge \int_0^uF_\nu^{-1}(v)\,dv.
\end{equation}
As, according to \cite[Theorem 1.5]{GoRoSaSh18} and \cite[Proposition 4.2]{AlCoJo20}, in one dimension, there exist optimizers of \eqref{eq:def.Wasserstein_projection.I} and \eqref{eq:def.Wasserstein_projection.J} not depending on the power $p \ge 1$, we drop the subscript. Moreover, by \cite[Theorem 2.6 and Proposition 4.2]{AlCoJo20}, the respective quantile functions of such optimizers $\mathcal I(\mu,\nu)$ and $\mathcal J(\mu,\nu)$ are obtained for all $u\in(0,1)$ by
\begin{equation}\label{eq:quantileProjections}
F_{\mathcal I(\mu,\nu)}^{-1}(u)=F_\mu^{-1}(u)-\partial_-\co(G)(u)\text{ and } F_{\mathcal J(\mu,\nu)}^{-1}(u)=F_\nu^{-1}(u)+\partial_-\co(G)(u),
\end{equation}
where $G:v\mapsto\int_0^v(F_\mu^{-1}-F_\nu^{-1})(u)\,du$, $\co$ denotes the convex hull, and $\partial_-$ the left-hand derivative.
The optimizers $\mathcal I_1(\mu,\nu)$ and $\mathcal J_1(\mu,\nu)$ are not necessarily unique, see \cite[Remark 2.3]{AlCoJo20} for $\mathcal I_1$, hence, the maps $\mathcal I$ and $\mathcal J$ provide continuous selections of optimizers for $p=1$. 
A complete geometric characterization of $\mathcal I(\mu,\nu)$ and $\mathcal J(\mu,\nu)$ is given in \cite{BaBePa19}. 
Note that \eqref{eq:quantileProjections} implies
\begin{equation}
   \mathcal W_p(\mathcal I(\mu,\nu),\mu)=\mathcal W_p(\mathcal J(\mu,\nu),\nu)\mbox{ and }\mathcal W_p(\mathcal I(\mu,\nu),\nu)=\mathcal W_p(\mathcal J(\mu,\nu),\mu),\label{eq:equaldist}
 \end{equation}
 where, according to \cite[Corollary 4.4]{AlCoJo20}, the first equality still holds for $d\ge 2$ when $\mathcal I$ and $\mathcal J$ are replaced by $\mathcal I_p$ and $\mathcal J_p$ respectively.
The next examples show that the constants in \eqref{eq:I_Lipschitz} and \eqref{eq:J_Lipschitz} are sharp for $p = 1$.

\begin{example}
	Let $\mu \in \mathcal P_p(\R)$ and $\nu$ be a Dirac measure.
	As $\nu$ is the only measure dominated by itself in the convex order, $\mathcal I(\mu,\nu) = \nu$ and, as a consequence of \eqref{eq:equaldist}, $\mathcal J(\mu,\nu) = \mu$.
	When $\nu'$ is also a dirac mass, we deduce for any $\mu' \in \mathcal P_p(\R)$ that
	\[
		\W_p(\mathcal I(\mu,\nu), \mathcal I(\mu,\nu')) = \W_p(\nu,\nu') \quad \mbox{and} \quad
		\W_p(\mathcal J(\mu,\nu), \mathcal J(\mu',\nu)) = \W_p(\mu,\mu').
	\]
Hence the factor $1$ multiplying $\mathcal W_p(\nu,\nu')$ in the right-hand side of \eqref{eq:I_Lipschitz} and multiplying $\mathcal W_p(\mu,\mu')$ in the right-hand side of \eqref{eq:J_Lipschitz} is optimal.\hfill$\Diamond$
\end{example}
\begin{example}
	We fix $\mu := \delta_0$  and define, for $\alpha \in (0,1)$,
	\[
		\textstyle
		\nu^\alpha := (1-\alpha) \delta_{-\alpha^2} + \alpha \delta_1.
	\]
	We have $\mathcal I(\nu^\alpha,\nu^\alpha) = \nu^\alpha$ and $\mathcal I(\mu,\nu^\alpha) = \delta_{\alpha (1 - \alpha(1 - \alpha))}$, so that
	\begin{align*}
		\textstyle
		\W_1(\mathcal I(\nu^\alpha,\nu^\alpha), \mathcal I(\mu,\nu^\alpha)) &=
		2 \left( \alpha + \alpha^2 (\alpha ( 1 - \alpha) - 1) \right), \\
		\W_1(\mu,\nu^\alpha) &=  \alpha+\alpha^2 (1 - \alpha).	
	\end{align*}
	Then, an application of the de l'H\^opital rule yields
	$\lim_{\alpha \searrow 0}
		\frac{\W_1(\mathcal I(\nu^\alpha,\nu^\alpha), \mathcal I(\mu,\nu^\alpha))}{\W_1(\mu,\nu^\alpha)} = 2$.
	Hence, the factor 2 in \eqref{eq:I_Lipschitz} is optimal when $p = 1$.
	Since $\mathcal J(\delta_{\alpha ( 1 - \alpha(1- \alpha))},\nu^\alpha)=\nu^\alpha$ and $\mathcal J(\delta_{\alpha(1-\alpha(1-\alpha))},\delta_0)=\delta_{\alpha(1-\alpha(1 - \alpha))}$, we find in the same way that the factor 2 is also optimal in \eqref{eq:J_Lipschitz} when $p = 1$.\hfill$\Diamond$
\end{example}
\begin{example}
	Let $\mu,\mu',\nu,\nu'$ be the probability measures with quantile functions: 
	\begin{gather*}
		\textstyle
		F_\mu^{-1}(u)=u\mathbbm 1_{(0,\frac 12]}(u)+\frac{1+u}{2}\mathbbm 1_{(\frac 12,1)}(u), \quad 
		F_\nu^{-1}(u)=\frac u2, \\
		\textstyle
		F_{\mu'}^{-1}(u)=u\mathbbm 1_{(0,\frac 12]}(u)+\frac{12+5u}{18}\mathbbm 1_{(\frac 12,1)}(u),\quad
		F_{\nu'}^{-1}(u)=\frac u3\mathbbm 1_{(0,\frac 12]}(u)+\frac u2\mathbbm 1_{(\frac 12,1)}(u).	
	\end{gather*}
	We check that $F_{\mathcal I(\mu,\nu)}^{-1}(u)=\frac u2$ and $F_{\mathcal I(\mu',\nu')}^{-1}(u)=\frac u3 \mathbbm 1_{(0,\frac12]}+\frac{3+5u}{18}\mathbbm 1_{(\frac 12,1)}(u)$, whence,
	\[ 
		\mathcal W^p_p(\mathcal I(\mu,\nu),\mathcal I(\mu',\nu'))=\mathcal W^p_p(\mu,\mu')+\mathcal W^p_p(\nu,\nu')
	\]
	with two positive summands.\hfill $\Diamond$
\end{example}

\subsection{On the convex-order lattice in dimension one}
 The restauration of the convex order which was the original motivation for the introduction of the Wasserstein  projections can also be achieved in dimension one using that $\mathcal P_1(\R)$ is a complete lattice for the increasing and decreasing convex orders (see \cite{KeRo00}). They both coincide with the convex order on the subset $\mathcal P^{x_0}_p(\R)$ of $\mathcal P_p(\R)$ consisting in probability measures with barycentre $x_0 \in \R$. 
On $\mathcal P^{x_0}_1(\R)$, the minimum $\wedge_c$ and maximum $\vee_c$ can be expressed in terms of potential functions: 
the potential function of $\mu \in \mathcal P_1(\R)$ is defined by
\[
	\textstyle
	u_\mu(x) := \int_\R |x - y| \, \mu(dy).
\]
For $\mu,\nu \in \mathcal P^{x_0}_1(\R)$, $\mu\wedge_c\nu$ and $\mu\vee_c\nu$ are uniquely determined by
\[
	u_{\mu \wedge_c \nu} = \co(u_\mu \wedge u_\nu) \quad\mbox{and}\quad
	u_{\mu \vee_c \nu} = u_\mu \vee u_\nu.
\]
On the domain $\mathcal P^{x_0}_p(\R) \times \mathcal P^{x_0}_p(\R)$,
\[
	(\mu,\nu) \mapsto \mu \wedge_c \nu \quad \mbox{and} \quad (\mu,\nu) \mapsto \mu\vee_c \nu,
\]
are continuous mappings into $\mathcal P^{x_0}_p(\R)$:
\cite[Lemma 4.1]{BeJoMaPa21a} provides continuity for $p = 1$ and \cite[Lemma 4.3]{AlCoJo20}, which ensures uniform integrability, permits to deduce continuity for general $p \ge 1$.
However, unlike $\mathcal I$ and $\mathcal J$, the minimum and maximum in the convex order are not Lipschitz continuous.

\begin{example}
	Consider for $n\ge3$ the measures in $\mathcal P_p(\R)$:

\begin{minipage}{0.45\textwidth}
	\begin{align*}\textstyle
		\nu:=& \frac1{2n}\delta_0 + \frac1n \sum_{i = 1}^{n - 1} \delta_{\frac{i}n} + \frac1{2n} \delta_1, \\
		\textstyle
		\mu :=& \frac1{n} \sum_{i = 1}^n \delta_{\frac{2i - 1}{2n}}, \\
		\textstyle
		\eta :=& \frac 3{2n}\delta_{\frac1{n}} + \frac1n \sum_{i = 2}^{n - 2} \delta_{\frac{i}n} + \frac 3{2n} \delta_{\frac{n - 1}{n}}.
	\end{align*}
\end{minipage}\hfill
\begin{minipage}{0.45\textwidth}
	\centering
	\begin{tikzpicture}[scale=0.8]
	\draw[thick,dashed] (-0.5,0)--(6.5,0) node[right] {$\nu$};
	\draw[thick] (0,-0.25) -- (0,0.25);
	\draw[thick] (1,-0.25) -- (1,0.25);
	\draw[thick] (2,-0.25) -- (2,0.25);
	\draw[thick] (3,-0.25) -- (3,0.25);
	\draw[thick] (4,-0.25) -- (4,0.25);
	\draw[thick] (5,-0.25) -- (5,0.25);
	\draw[thick] (6,-0.25) -- (6,0.25);	
	\node at (0,0) {$\bullet$};
	\node at (1,0) {$\bullet$};
	\node at (2,0) {$\bullet$};
	\node at (3,0) {$\bullet$};
	\node at (4,0) {$\bullet$};
	\node at (5,0) {$\bullet$};
	\node at (6,0) {$\bullet$};
	\node at (1,0.2) {$\bullet$};
	\node at (2,0.2) {$\bullet$};
	\node at (3,0.2) {$\bullet$};
	\node at (4,0.2) {$\bullet$};
	\node at (5,0.2) {$\bullet$};
	
	\draw[thick,dashed,->] (0.5,-1.75) -- (0,-0.25);
	\draw[thick,dashed,->] (0.5,-1.75) -- (1,-0.25);
	\draw[thick,dashed,->] (1.5,-1.75) -- (1,-0.25);
	\draw[thick,dashed,->] (1.5,-1.75) -- (2,-0.25);
	\draw[thick,dashed,->] (2.5,-1.75) -- (2,-0.25);
	\draw[thick,dashed,->] (2.5,-1.75) -- (3,-0.25);
	\draw[thick,dashed,->] (3.5,-1.75) -- (3,-0.25);
	\draw[thick,dashed,->] (3.5,-1.75) -- (4,-0.25);
	\draw[thick,dashed,->] (4.5,-1.75) -- (4,-0.25);
	\draw[thick,dashed,->] (4.5,-1.75) -- (5,-0.25);
	\draw[thick,dashed,->] (5.5,-1.75) -- (5,-0.25);
	\draw[thick,dashed,->] (5.5,-1.75) -- (6,-0.25);
	
	\draw[thick,dashed,->] (0.5,-1.75) -- (0,-0.25);
	\draw[thick,dashed,->] (0.5,-1.75) -- (1,-0.25);
	\draw[thick,dashed,->] (1.5,-1.75) -- (1,-0.25);
	\draw[thick,dashed,->] (1.5,-1.75) -- (2,-0.25);
	\draw[thick,dashed,->] (2.5,-1.75) -- (2,-0.25);
	\draw[thick,dashed,->] (2.5,-1.75) -- (3,-0.25);
	\draw[thick,dashed,->] (3.5,-1.75) -- (3,-0.25);
	\draw[thick,dashed,->] (3.5,-1.75) -- (4,-0.25);
	\draw[thick,dashed,->] (4.5,-1.75) -- (4,-0.25);
	\draw[thick,dashed,->] (4.5,-1.75) -- (5,-0.25);
	\draw[thick,dashed,->] (5.5,-1.75) -- (5,-0.25);
	\draw[thick,dashed,->] (5.5,-1.75) -- (6,-0.25);

	\begin{scope}[shift={(0,-2)}]
	\draw[thick,dashed] (-0.5,0)--(6.5,0) node[right] {$\mu$};
	\draw[thick] (0,-0.25) -- (0,0.25);
	\draw[thick] (1,-0.25) -- (1,0.25);
	\draw[thick] (2,-0.25) -- (2,0.25);
	\draw[thick] (3,-0.25) -- (3,0.25);
	\draw[thick] (4,-0.25) -- (4,0.25);
	\draw[thick] (5,-0.25) -- (5,0.25);
	\draw[thick] (6,-0.25) -- (6,0.25);
	\node at (0.5,0) {$\bullet$};
	\node at (1.5,0) {$\bullet$};
	\node at (2.5,0) {$\bullet$};
	\node at (3.5,0) {$\bullet$};
	\node at (4.5,0) {$\bullet$};
	\node at (5.5,0) {$\bullet$};
	\node at (0.5,0.2) {$\bullet$};
	\node at (1.5,0.2) {$\bullet$};
	\node at (2.5,0.2) {$\bullet$};
	\node at (3.5,0.2) {$\bullet$};
	\node at (4.5,0.2) {$\bullet$};
	\node at (5.5,0.2) {$\bullet$};
	
	\draw[thick,dashed,->] (1,-1.5) -- (0.5,-0.25);
	\draw[thick,dashed,->] (1,-1.5) -- (1.5,-0.25);
	\draw[thick,dashed] (1,-1.5) -- (5.5,-0.33);
	\draw[thick,dashed,->] (2,-1.75) -- (1.5,-0.25);
	\draw[thick,dashed,->] (2,-1.75) -- (2.5,-0.25);
	\draw[thick,dashed,->] (3,-1.75) -- (2.5,-0.25);
	\draw[thick,dashed,->] (3,-1.75) -- (3.5,-0.25);
	\draw[thick,dashed,->] (4,-1.75) -- (3.5,-0.25);
	\draw[thick,dashed,->] (4,-1.75) -- (4.5,-0.25);
	\draw[thick,dashed,->] (5,-1.5) -- (4.5,-0.25);
	\draw[thick,dashed,->] (5,-1.5) -- (5.5,-0.25);
	\draw[thick,dashed] (5,-1.5) -- (0.5,-0.33);
	\end{scope}
	
	\begin{scope}[shift={(0,-4)}]
	\draw[thick,dashed] (-0.5,0)--(6.5,0) node[right] {$\eta$};
	\draw[thick] (0,-0.25) -- (0,0.25) node[yshift=-2em] {$0$};
	\draw[thick] (1,-0.25) -- (1,0.25) node[yshift=-2em] {$\frac1n$};
	\draw[thick] (2,-0.25) -- (2,0.25) node[yshift=-2em] {$\dots$};
	\draw[thick] (3,-0.25) -- (3,0.25) node[yshift=-2em] {$\dots$};
	\draw[thick] (4,-0.25) -- (4,0.25) node[yshift=-2em] {$\dots$};
	\draw[thick] (5,-0.25) -- (5,0.25) node[yshift=-2em] {$\frac{n-1}n$};
	\draw[thick] (6,-0.25) -- (6,0.25) node[yshift=-2em] {$1$};
	\node at (1,0.4) {$\bullet$};
	\node at (1,0) {$\bullet$};
	\node at (2,0) {$\bullet$};
	\node at (3,0) {$\bullet$};
	\node at (4,0) {$\bullet$};
	\node at (5,0) {$\bullet$};
	\node at (5,0.4) {$\bullet$};
	\node at (1,0.2) {$\bullet$};
	\node at (2,0.2) {$\bullet$};
	\node at (3,0.2) {$\bullet$};
	\node at (4,0.2) {$\bullet$};
	\node at (5,0.2) {$\bullet$};
	\end{scope}
	\end{tikzpicture}
\end{minipage}%
\vspace{1em}
Observe that, for the martingale kernel $K(x,dy) := \frac12 \delta_{x - \frac1{2n}} + \frac12 \delta_{x + \frac1{2n}}$, $\mu(dx)\, K(x,dy) \in \Pi(\mu,\nu)$. By Strassen's theorem \cite{St65} we find that $\mu \le_c \nu$, $\eta \le_c \tilde \mu$ where $\tilde \mu(dy):=\int_{x\in\R}K(x,dy)\,\eta(dx)$ is such that $\mu-\tilde\mu=\frac 1{4n}\delta_{\frac 1{2n}}+\frac 1{4n}\delta_{\frac{2n-1}{2n}}-\frac 1{4n}\delta_{\frac 3{2n}}-\frac 1{4n}\delta_{\frac{2n-3}{2n}}$ so that $\tilde \mu \le_c \mu$. Hence $\eta \le_c \mu \le_c \nu$ and $\mu \vee_c \nu=\nu$, $\mu\vee_c \eta=\mu$, $\mu\wedge_c \nu=\mu$, $\mu\wedge_c \eta=\eta$.
We compute
\[	\textstyle
	\W_p(\mu,\nu) = \W_p(\eta,\mu) = \frac{1}{2n} \quad \mbox{and} \quad
	\W_p(\eta,\nu) = \frac{1}{n^{1+1/p}},
\]
from where we conclude that
\begin{equation}
	\textstyle
	\frac{\W_p(\mu \vee_c \nu, \mu\vee_c \eta)}{\W_p(\eta,\nu)} = 
	\frac{\W_p(\mu\wedge_c \nu, \mu\wedge_c \eta)}{\W_p(\eta,\nu)} =
	\frac{n^{1 / p}}{2}.
\end{equation}
Consequentially, $\wedge_c$ and $\vee_c$ are not Lipschitz continuous.\hfill$\Diamond$
\end{example}

In particular, this example shows that for probability measures with the same barycenter, in general, the Wasserstein projections do not coincide with the minimum and maximum in the convex order.
Beyond that, they satisfy the following order relation.

\begin{proposition}\label{prop:IJordering}
	Let $\mu,\nu \in \mathcal P_p(\R)$ have the same barycenter. Then
	\begin{equation}
		\mathcal I(\mu,\nu) \le_c \mu \wedge_c \nu
		\quad\mbox{and}\quad
		\mathcal J(\mu,\nu) \ge_c \mu \vee_c \nu.
	\end{equation}
\end{proposition}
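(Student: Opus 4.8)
The plan is to first establish the four single comparisons $\mathcal I(\mu,\nu)\le_c\mu$, $\mathcal I(\mu,\nu)\le_c\nu$, $\mu\le_c\mathcal J(\mu,\nu)$, $\nu\le_c\mathcal J(\mu,\nu)$, and then to conclude from the fact that, on $\mathcal P^{x_0}_p(\R)$ (where $x_0$ denotes the common barycenter), $\mu\wedge_c\nu$ and $\mu\vee_c\nu$ are \emph{by definition} the greatest lower bound and the least upper bound of $\{\mu,\nu\}$ for $\le_c$. The whole argument rests on a single observation about $G\colon v\mapsto\int_0^v(F_\mu^{-1}-F_\nu^{-1})(u)\,du$ from \eqref{eq:quantileProjections}: since $\mu$ and $\nu$ have the same barycenter, $G$ is continuous on $[0,1]$ with $G(0)=G(1)=0$, and consequently $\co(G)\le 0$ on $[0,1]$. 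Indeed, the greatest convex minorant of a continuous function on a compact interval takes the same value as the function at each endpoint (a convex combination equal to an endpoint can only involve that endpoint), so $\co(G)(0)=\co(G)(1)=0$, and then $\co(G)(u)\le(1-u)\,\co(G)(0)+u\,\co(G)(1)=0$ for all $u\in[0,1]$ by convexity.

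With this in hand, I would integrate the quantile formulas \eqref{eq:quantileProjections}. Since $\co(G)$ is absolutely continuous on $[0,1]$ with $\co(G)(0)=0$, they integrate to $\int_0^uF_{\mathcal I(\mu,\nu)}^{-1}(v)\,dv=\int_0^uF_\mu^{-1}(v)\,dv-\co(G)(u)$ and $\int_0^uF_{\mathcal J(\mu,\nu)}^{-1}(v)\,dv=\int_0^uF_\nu^{-1}(v)\,dv+\co(G)(u)$ for all $u\in[0,1]$. Taking $u=1$ shows $\mathcal I(\mu,\nu)$ and $\mathcal J(\mu,\nu)$ have barycenter $x_0$; combined with $\mathcal I(\mu,\nu)\le_c\nu$ and $\mu\le_c\mathcal J(\mu,\nu)$, which hold by construction of the projections, this places both measures in $\mathcal P^{x_0}_p(\R)$ and makes the characterisation \eqref{eq:caracc1d} applicable. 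Now $\co(G)\le 0$ gives $\int_0^uF_{\mathcal I(\mu,\nu)}^{-1}(v)\,dv\ge\int_0^uF_\mu^{-1}(v)\,dv$, while $\co(G)\le G$ together with $\int_0^uF_\mu^{-1}(v)\,dv-G(u)=\int_0^uF_\nu^{-1}(v)\,dv$ gives $\int_0^uF_{\mathcal I(\mu,\nu)}^{-1}(v)\,dv\ge\int_0^uF_\nu^{-1}(v)\,dv$; by \eqref{eq:caracc1d} these are precisely $\mathcal I(\mu,\nu)\le_c\mu$ and $\mathcal I(\mu,\nu)\le_c\nu$. Symmetrically, applying $\co(G)\le 0$ and $\co(G)\le G$ to the second identity yields $\int_0^uF_{\mathcal J(\mu,\nu)}^{-1}(v)\,dv\le\int_0^uF_\nu^{-1}(v)\,dv$ and $\le\int_0^uF_\mu^{-1}(v)\,dv$, i.e.\ $\nu\le_c\mathcal J(\mu,\nu)$ and $\mu\le_c\mathcal J(\mu,\nu)$.

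To finish, I would invoke the lattice structure: as $\mathcal I(\mu,\nu)\in\mathcal P^{x_0}_p(\R)$ lies below both $\mu$ and $\nu$ in $\le_c$, it lies below their infimum $\mu\wedge_c\nu$; dually $\mathcal J(\mu,\nu)$ lies above their supremum $\mu\vee_c\nu$. Alternatively, one can bypass the abstract lattice language via potential functions: from $\mathcal I(\mu,\nu)\le_c\mu$ and $\mathcal I(\mu,\nu)\le_c\nu$ one gets $u_{\mathcal I(\mu,\nu)}\le u_\mu\wedge u_\nu$, and since $u_{\mathcal I(\mu,\nu)}$ is convex it follows that $u_{\mathcal I(\mu,\nu)}\le\co(u_\mu\wedge u_\nu)=u_{\mu\wedge_c\nu}$, hence $\mathcal I(\mu,\nu)\le_c\mu\wedge_c\nu$; and symmetrically $u_{\mathcal J(\mu,\nu)}\ge u_\mu\vee u_\nu=u_{\mu\vee_c\nu}$.

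I do not expect a genuine obstacle. The only points that need a little care are the endpoint behaviour of the convex hull on the \emph{closed} interval $[0,1]$ (used both to get $\co(G)\le 0$ and to get $\co(G)(0)=0$ when integrating \eqref{eq:quantileProjections}), and checking, at each application of \eqref{eq:caracc1d}, that the two measures involved share the barycenter $x_0$ — which is exactly why recording $\mathcal I(\mu,\nu),\mathcal J(\mu,\nu)\in\mathcal P^{x_0}_p(\R)$ is included as an explicit step.
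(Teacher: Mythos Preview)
Your proof is correct and follows the same overall strategy as the paper: derive $\mathcal I(\mu,\nu)\le_c\mu$ from the quantile representation \eqref{eq:quantileProjections} via the criterion \eqref{eq:caracc1d}, combine it with $\mathcal I(\mu,\nu)\le_c\nu$, and conclude by the lattice property (and analogously for $\mathcal J$). The only tactical difference is in how the key inequality $\int_0^u F_{\mathcal I(\mu,\nu)}^{-1}\ge\int_0^u F_\mu^{-1}$ is obtained. You integrate \eqref{eq:quantileProjections} directly to identify the difference as $-\co(G)(u)$ and then argue that $\co(G)\le 0$ because $G$ is continuous with $G(0)=G(1)=0$; the paper instead observes that $F_{\mathcal I(\mu,\nu)}^{-1}-F_\mu^{-1}=-\partial_-\co(G)$ is non-increasing, so its average over $[0,u]$ dominates its average over $[0,1]$, and the latter is the barycenter difference, which vanishes. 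The two arguments are equivalent; yours is a shade more explicit about the antiderivative, while the paper's avoids having to discuss the endpoint values of $\co(G)$ by going through the barycenter of $\mathcal I(\mu,\nu)$ directly.
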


\begin{proof}[Proof of Proposition \ref{prop:IJordering}]
	The map $u\mapsto F_{\mathcal I(\mu,\nu)}^{-1}(u)-F_\mu^{-1}(u)$ is non-increasing due to \eqref{eq:quantileProjections}.
	Therefore, monotonicity of the integrand yields, for $u \in (0,1)$ that
	\[ \textstyle
	\frac1 u\int_0^u(F_{\mathcal I(\mu,\nu)}^{-1}(v)-F_\mu^{-1}(v))\,dv\ge\int_0^1(F_{\mathcal I(\mu,\nu)}^{-1}(v)-F_\mu^{-1}(v))\,dv=\int_\R y\,\nu(dy)-\int_\R x\,\mu(dx),
	\]
	where the last equality comes from the inverse transform sampling and the fact that, as $\mathcal I(\mu,\nu) \le_c \nu$, $\mathcal I(\mu,\nu)$ and $\nu$ share the same barycentre.
	If $\mu$ and $\nu$ have the same barycenter, or put equivalently, $\int_\R y\,\nu(dy)=\int_\R x\,\mu(dx)$, then $\mathcal I(\mu,\nu)$ shares this barycenter and we deduce by \eqref{eq:caracc1d} that $\mathcal I(\mu,\nu)\le_c\mu$, thus, $\mathcal I(\mu,\nu)\le_c\mu\wedge_c\nu$. 
	Analogously, we have $\mu\le_c\mathcal J(\mu,\nu)$, and if $\mu$ and $\nu$ share the same barycentre,  $\nu\le_c\mathcal J(\mu,\nu)$, hence, $\mu\vee_c\nu\le\mathcal J(\mu,\nu)$.
\end{proof}

\section{Proof of Theorem \ref{thm:main_result}}
The proof of Theorem \ref{thm:main_result} relies on the next two results whose proofs are postponed.
\begin{lemma}
	\label{lem:continuity_W_projection}
	For $p\ge 1$, $\mathcal I$ and $\mathcal J$ are continuous maps on $\mathcal P_p(\R) \times \mathcal P_p(\R)$ to $\mathcal P_p(\R)$.
\end{lemma}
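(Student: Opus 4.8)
The strategy is to reduce everything, via \eqref{eq:wass1d}, to the $L^p(0,1)$-metric on quantile functions and then to exploit the explicit formula \eqref{eq:quantileProjections}. Convergence $(\mu_n,\nu_n)\to(\mu,\nu)$ in $\mathcal P_p(\R)\times\mathcal P_p(\R)$ means $F_{\mu_n}^{-1}\to F_\mu^{-1}$ and $F_{\nu_n}^{-1}\to F_\nu^{-1}$ in $L^p(0,1)$. Set $G_n(v):=\int_0^v\bigl(F_{\mu_n}^{-1}-F_{\nu_n}^{-1}\bigr)(u)\,du$ and recall $G(v):=\int_0^v\bigl(F_\mu^{-1}-F_\nu^{-1}\bigr)(u)\,du$. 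By \eqref{eq:quantileProjections} we have $\partial_-\co(G_n)=F_{\mu_n}^{-1}-F_{\mathcal I(\mu_n,\nu_n)}^{-1}=F_{\mathcal J(\mu_n,\nu_n)}^{-1}-F_{\nu_n}^{-1}$, so once we prove $\partial_-\co(G_n)\to\partial_-\co(G)$ in $L^p(0,1)$ the triangle inequality in $L^p(0,1)$ gives $\W_p(\mathcal I(\mu_n,\nu_n),\mathcal I(\mu,\nu))\to0$ and $\W_p(\mathcal J(\mu_n,\nu_n),\mathcal J(\mu,\nu))\to0$ simultaneously. Since $\mathcal P_p(\R)$ is a metric space, sequential continuity is all that is needed.

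The first step is the convergence of the convex hulls. As $\|\cdot\|_{L^1(0,1)}\le\|\cdot\|_{L^p(0,1)}$, we get $G_n\to G$ uniformly on $[0,1]$ from $\sup_{v\in[0,1]}|G_n(v)-G(v)|\le\|(F_{\mu_n}^{-1}-F_{\nu_n}^{-1})-(F_\mu^{-1}-F_\nu^{-1})\|_{L^1(0,1)}$. Writing the convex hull of a continuous function $h$ on $[0,1]$ as the pointwise infimum of $(1-t)h(a)+th(b)$ over all $a,b\in[0,1]$, $t\in[0,1]$ with $(1-t)a+tb=v$ (two points suffice by Carathéodory in $\R^2$), one obtains $\|\co(G_n)-\co(G)\|_{\infty}\le\|G_n-G\|_{\infty}\to0$. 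Since $\co(G_n)$ and $\co(G)$ are convex on $(0,1)$ and converge pointwise there, the standard monotonicity argument for difference quotients of convex functions yields $\partial_-\co(G_n)(u)\to\partial_-\co(G)(u)$ at every $u\in(0,1)$ where $\co(G)$ is differentiable, hence for a.e.\ $u\in(0,1)$.

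The main point is to upgrade this a.e.\ convergence to convergence in $L^p(0,1)$, which we do via Vitali's convergence theorem: it remains to check that $\{|\partial_-\co(G_n)|^p\}_n$ is uniformly integrable on $(0,1)$. From the bound $|\partial_-\co(G_n)|^p\le2^{p-1}\bigl(|F_{\mu_n}^{-1}|^p+|F_{\mathcal I(\mu_n,\nu_n)}^{-1}|^p\bigr)$ it suffices to treat the two families separately. The family $\{|F_{\mu_n}^{-1}|^p\}_n$ is uniformly integrable because $(F_{\mu_n}^{-1})_n$ converges in $L^p(0,1)$. For the second family we use that $\mathcal I(\mu_n,\nu_n)\le_c\nu_n$: since $x\mapsto(|x|-R/2)_+^p$ is convex and $|x|^p\mathbbm 1_{\{|x|>R\}}\le2^p(|x|-R/2)_+^p$ for every $x$, transferring to quantile functions by inverse transform sampling gives, for every $R>0$,
\[
\int_{\{|F_{\mathcal I(\mu_n,\nu_n)}^{-1}|>R\}}|F_{\mathcal I(\mu_n,\nu_n)}^{-1}|^p\,du\le2^p\int_{\R}(|x|-R/2)_+^p\,\mathcal I(\mu_n,\nu_n)(dx)\le2^p\int_{\{|x|>R/2\}}|x|^p\,\nu_n(dx),
\]
and the right-hand side tends to $0$ as $R\to\infty$ uniformly in $n$, because the $L^p(0,1)$-convergence of $(F_{\nu_n}^{-1})_n$ makes $\{|F_{\nu_n}^{-1}|^p\}_n$ uniformly integrable. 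Hence $\{|\partial_-\co(G_n)|^p\}_n$ is uniformly integrable, Vitali's theorem gives $\partial_-\co(G_n)\to\partial_-\co(G)$ in $L^p(0,1)$, and the claim follows from the first paragraph.

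I expect the uniform integrability estimate to be the only genuinely non-routine step; the convergence of the convex hulls and of their left derivatives is soft. (For $p>1$ one could instead run a stability-of-minimizers argument based on the uniqueness of $\mathcal I_p(\mu,\nu)$ and on the continuity of the constraint sets $\{\eta\le_c\nu_n\}$, but this would not directly cover the canonical selection $\mathcal I$ at $p=1$, whereas the quantile approach above treats all $p\in[1,\infty)$ at once.)
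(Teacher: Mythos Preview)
Your proof is correct and takes a genuinely different route from the paper's. The paper does not touch the quantile formula \eqref{eq:quantileProjections} here: it argues precompactness of $(\mathcal J(\mu_n,\nu_n))_n$ in $\mathcal P_p(\R)$ via de la Vall\'ee--Poussin (choosing a superlinear convex $\theta$ and controlling $\int\theta(|x|^p/4^p)\,\mathcal J(\mu_n,\nu_n)(dx)$ through the auxiliary transport value $\mathcal V_\theta$), and then identifies the limit by invoking the stability theorem for barycentric weak optimal transport from \cite{BaBePa19}. Your argument instead stays entirely in $L^p(0,1)$: uniform convergence of $G_n$, hence of $\co(G_n)$, then a.e.\ convergence of the one-sided derivatives, upgraded to $L^p$ by Vitali. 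The key novelty on your side is the uniform-integrability bound for $|F_{\mathcal I(\mu_n,\nu_n)}^{-1}|^p$ obtained from $\mathcal I(\mu_n,\nu_n)\le_c\nu_n$ via the convex test function $(|x|-R/2)_+^p$; this replaces both the de la Vall\'ee--Poussin step and the external stability citation, making the proof self-contained and treating $\mathcal I$ and $\mathcal J$ simultaneously through the single object $\partial_-\co(G_n)$. The paper's approach, by contrast, is formulated so that the $\mathcal J$ case (where no a priori convex domination is available) is the harder one, and its reliance on weak optimal transport stability is closer in spirit to arguments that might extend beyond $d=1$.
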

\begin{proposition} 
	\label{prop:convex_hull_contracting}
	Let $f$ and $g$ be real-valued càdlàg functions on $[0,1]$ with respective antiderivatives $F$ and $G$.
	We have, for $p \ge 1$,
	\begin{equation}
		\label{eq:convex_hull_contracting.assertion}
		\left\Vert \partial_+ \left(\co(F) - \co(G)\right) \right\Vert_p \leq \left\Vert f - g\right\Vert_p.
	\end{equation}	
\end{proposition}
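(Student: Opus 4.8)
The strategy is to exhibit the map $T\colon f\mapsto\partial_+\co(F)$---which depends only on $f$ and not on the choice of antiderivative $F$, so that we may normalise $F(0)=0$---as a non-expansive operator on every $L^p([0,1])$, by controlling it on the level sets of $Tf-Tg$ in the spirit of the Crandall--Tartar contraction lemma. Since a càdlàg function on the compact interval $[0,1]$ is bounded, $F$ and $G$ are Lipschitz, $\co(F)$ and $\co(G)$ are Lipschitz, their right-derivatives are bounded, both sides of \eqref{eq:convex_hull_contracting.assertion} are finite, and $\partial_+(\co(F)-\co(G))=Tf-Tg$ pointwise. The proof uses three properties of $T$ on the class $\mathcal D$ of càdlàg functions on $[0,1]$ (which consists of bounded functions and is stable under $\vee$): \textbf{(i)} $T(f+c)=Tf+c$ for every $c\in\R$, because $\co$ commutes with the addition of affine functions; \textbf{(ii)} $\int_0^1 Tf\,du=\co(F)(1)-\co(F)(0)=F(1)-F(0)=\int_0^1 f\,du$, where the middle equality uses the standard fact that the greatest convex minorant of a continuous function on a compact interval coincides with it at the endpoints; \textbf{(iii)} $T$ is order-preserving, i.e.\ $f\le g$ implies $Tf\le Tg$.

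Property (iii) is the heart of the argument and the step I expect to demand the most care. I would derive it from the classical min--max representation of the slope of the greatest convex minorant,
\[
	\partial_+\co(F)(u)=\sup_{0\le s\le u}\ \inf_{u\le t\le 1}\ \frac{1}{t-s}\int_s^t f(v)\,dv\qquad(u\in(0,1)),
\]
with the obvious conventions when $s=u$ or $t=u$. Once this is established, (iii) is immediate: $f\le g$ makes every average $\tfrac1{t-s}\int_s^t f$ no larger than $\tfrac1{t-s}\int_s^t g$, and taking the infimum over $t$ and then the supremum over $s$ preserves this inequality. Proving the representation, together with the attendant measurability and endpoint bookkeeping (for instance that $\co(F)$ touches $F$ at $0$ and $1$, and that $\partial_+\co(F)$ equals $f$ off the flat intervals of $\co(F)$ and the relevant average on each of them), is essentially the only nontrivial ingredient; everything around it is soft.

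Granting (i)--(iii), fix $f,g\in\mathcal D$ and $a\ge 0$, and put $g_a:=g+a\in\mathcal D$. From $f\le f\vee g_a$ and $g_a\le f\vee g_a$, property (iii) gives $Tf\le T(f\vee g_a)$ and, via (i), $Tg+a=Tg_a\le T(f\vee g_a)$; hence $0\le(Tf-Tg-a)^+=(Tf-Tg_a)^+\le T(f\vee g_a)-Tg_a$, and integrating over $[0,1]$ and applying (ii) to $f\vee g_a$ and to $g_a$,
\[
	\int_0^1(Tf-Tg-a)^+\,du\ \le\ \int_0^1\bigl((f\vee g_a)-g_a\bigr)\,du=\int_0^1(f-g-a)^+\,du.
\]
Exchanging $f$ and $g$ yields $\int_0^1(Tg-Tf-a)^+\,du\le\int_0^1(g-f-a)^+\,du$ for every $a\ge 0$. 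To finish: for $p=1$ take $a=0$ in both inequalities and add, using $|t|=t^++(-t)^+$. For $p>1$ insert $t=Tf(u)-Tg(u)$ into the identity $|t|^p=p(p-1)\int_0^\infty\bigl((t-a)^++(-t-a)^+\bigr)a^{p-2}\,da$, integrate over $u\in(0,1)$, exchange the order of integration by Tonelli, and use the two families of inequalities above to obtain
\[
	\int_0^1|Tf-Tg|^p\,du\ \le\ p(p-1)\int_0^\infty\Bigl(\int_0^1(f-g-a)^+\,du+\int_0^1(g-f-a)^+\,du\Bigr)a^{p-2}\,da=\int_0^1|f-g|^p\,du,
\]
which is exactly \eqref{eq:convex_hull_contracting.assertion}.
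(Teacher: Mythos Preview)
Your argument is correct and takes a genuinely different route from the paper. The paper first approximates $f,g$ by piecewise constant c\`adl\`ag functions (Lemma~\ref{lem:cadlag_approx}), then proves the piecewise-constant case by an induction along the partition, feeding each step through the technical Lemma~\ref{lem:convex_affine}, whose proof is a careful two-case computation of the convex hulls and relies on the auxiliary Lemmas~\ref{lem:convex_est} and~\ref{lem:quantile Function increasing ordering}. You bypass all of this by recognising $T\colon f\mapsto\partial_+\co(\int_0^{\cdot}f)$ as a Crandall--Tartar type operator: order-preserving, mean-preserving, and commuting with addition of constants. The only substantive ingredient you invoke is the monotonicity (iii), which you obtain from the $\sup$--$\inf$ representation of the slope of the greatest convex minorant; this formula is indeed classical in isotonic regression (equivalently, $Tf$ is the $L^2$-isotonic projection of $f$, whose monotonicity in $f$ is standard), so the appeal is legitimate even though you do not spell out the proof. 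Your layer-cake step is also clean and yields, for free, the majorisation $\int_0^1(|Tf-Tg|-a)^+\,du\le\int_0^1(|f-g|-a)^+\,du$ for all $a\ge 0$, hence $\int\theta(|Tf-Tg|)\le\int\theta(|f-g|)$ for every increasing convex $\theta$---the same generality the paper achieves inside Lemma~\ref{lem:convex_affine}. In short: the paper's approach is more self-contained (no external representation formula), while yours is shorter, more conceptual, and makes transparent \emph{why} the convex-hull derivative is $L^p$-contractive.
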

\begin{proof}[Proof of Theorem \ref{thm:main_result}]
	Let $\mu,\mu',\nu,\nu' \in \mathcal P_p(\R)$.
	Assume for a moment that \eqref{eq:I_Lipschitz} and \eqref{eq:J_Lipschitz} hold for probability measures with bounded support.
	Since such measures are dense in $\mathcal P_p(\R)$, there exist $\mu_n,\mu'_n,\nu_n,\nu'_n \in \mathcal P_p(\R)$, $n\in\N$ with bounded support such that
	\begin{equation}\label{convergenceWrmunnun}
	\lim_{n\to+\infty} \mathcal W_p(\mu,\mu_n) + \mathcal W_p(\mu',\mu'_n) + \mathcal W_p(\nu,\nu_n) + \mathcal W_p(\nu',\nu'_n) = 0.
	\end{equation}
	We have by Lemma \ref{lem:continuity_W_projection} that $(\mathcal J(\mu_n,\nu_n))_{n \in \N}$ and $(\mathcal J(\mu'_n,\nu'_n))_{n \in \N}$ converge to $\mathcal J(\mu,\nu)$ and $\mathcal J(\mu',\nu')$ resp.\ in $\mathcal P_p(\R)$.
	Therefore,
	\begin{align*}
	\mathcal W_p\left(\mathcal J(\mu,\nu), \mathcal J(\mu',\nu')\right)
	&= 
	\lim_{n\to+\infty} \mathcal W_p\left(\mathcal J(\mu_n,\nu_n),\mathcal J(\mu'_n,\nu'_n)\right) 
	\\
	&\leq 
	\lim_{n\to+\infty} 2\mathcal W_p(\mu_n,\mu'_n) + \mathcal W_p(\nu_n,\nu'_n) 
	\\
	&= 2 \mathcal W_p(\mu,\mu') + \mathcal W_p(\nu,\nu').
	\end{align*}	
	Hence, we may assume that $\mu,\nu,\mu',\nu'$ have bounded supports. 
	This implies that the associated quantile functions are bounded on $(0,1)$ and since they are non-decreasing and have at most countably many jumps, coincide $\lambda$-a.s.\ with càdlàg functions on $[0,1]$, where $\lambda$ denotes the Lebesgue measure on $[0,1]$.
	Therefore,
	\[ \textstyle
		G:v\mapsto\int_0^v( F_{\mu}^{-1}- F_{\nu}^{-1})(u) \, du\quad\text{and}\quad G':v\mapsto\int_0^v (F_{\mu'}^{-1} - F_{\nu'}^{-1})(u)\,du,
    \]
	are the  antiderivatives of c\`adl\`ag functions on $[0,1]$. By Proposition \ref{prop:convex_hull_contracting} and \eqref{eq:wass1d},
	\begin{align*}
		\Vert  \partial_+\left(\co(G) - \co(G')\right)\Vert_p&\le\Vert \partial_+(G-G') \Vert_p=\Vert F_\mu^{-1} - F_\nu^{-1} - F_{\mu'}^{-1} + F_{\nu'}^{-1} \Vert_p
		\\ &\le \Vert F_\mu^{-1} - F_{\mu'}^{-1} \Vert_p + \Vert F_\nu^{-1} - F_{\nu'}^{-1} \Vert_p = \mathcal W_p(\mu,\mu') + \mathcal W_p(\nu,\nu').		
	\end{align*}
   By \eqref{eq:quantileProjections}, we have for $\lambda$-almost every $u \in (0,1)$ that
	\begin{align*}
	&F_{\mathcal I(\mu,\nu)}^{-1}(u) = F_\mu^{-1}(u)-\partial_+\co(G)(u)\mbox{ and }F_{\mathcal I(\mu',\nu')}^{-1}(u) = F_{\mu'}^{-1}(u)-\partial_+\co(G')(u),\\
		&F_{\mathcal J(\mu,\nu)}^{-1}(u) = F_\nu^{-1}(u)+\partial_+\co(G)(u)\mbox{ and }	F_{\mathcal J(\mu',\nu')}^{-1}(u) = F_{\nu'}^{-1}(u)+\partial_+\co(G')(u).
	\end{align*}
Therefore, using \eqref{eq:wass1d}, we obtain
	\begin{align*}
	\text{$\textstyle \mathcal W_p(\mathcal I(\mu,\nu),\mathcal I(\mu',\nu')) $}&\text{$\textstyle =\left\Vert  F_{\mathcal I(\mu,\nu)}^{-1}-F_{\mathcal I(\mu',\nu')}^{-1}\right\Vert_p = \left\Vert  F_\mu^{-1} -\partial_+ \co(G) - F_{\mu'}^{-1} +\partial_+ \co(G')\right\Vert_p$} \\
	&\text{$\textstyle \leq \left\Vert F_\mu^{-1} - F_{\mu'}^{-1} \right\Vert_p + \left\Vert  \partial_+\left(\co(G) - \co(G')\right)\right\Vert_p \leq 2\mathcal W_p(\mu,\mu') + \mathcal W_p(\nu,\nu').$} 
	\end{align*}
In the same way, \begin{align*}
   \mathcal W_p(\mathcal J(\mu,\nu),\mathcal J(\mu',\nu')) &= \left\Vert  F_\nu^{-1} + \partial_+ \co(G) - F_{\nu'}^{-1} - \partial_+ \co(G')\right\Vert_p \\&\leq \mathcal W_p(\mu,\mu') + 2\mathcal W_p(\nu,\nu').\qedhere
\end{align*}
\end{proof}
\begin{proof}[Proof of Lemma \ref{lem:continuity_W_projection}]
	In the following we will only show continuity of $\mathcal J$ and remark that continuity of $\mathcal I$ follows mutatis mutandis (and can be even shown with a simpler line of argument, since $\mathcal I(\mu,\nu) \le_{c} \nu$).
	Let $(\mu_n)_{n \in \N}$, $(\nu_n)_{n \in \N}$ be sequences that converge to $\mu_\infty$, $\nu_\infty$ resp.\ in $\mathcal P_p(\R)$.

	\emph{Step 1}. We show that $(\mathcal J(\mu_n,\nu_n))_{n \in \N}$ is a precompact subset of $\mathcal P_p(\R)$.
	As a consequence of the de la Vallée-Poussin theorem, see for example \cite[Theorem 4.5.9 and proof]{Bo07}, there exists a continuous, increasing and strictly convex function $\theta \colon \R_+ \to \R_+$ such that $\lim_{r \to \infty} \frac{r}{\theta(r)} = 0$ and
	\[ \textstyle
		\sup_{n \in \N} \int \theta(|x|^p) \,	\mu_n(dx) < \infty, \quad
		\sup_{n \in \N} \int \theta(|y|^p) \, \nu_n(dy) < \infty.
	\]
	Note that, when $p>1$, $\theta \circ (| \cdot |^p / 2^p)$ is also strictly convex as the composition of a convex, increasing function with a strictly convex function. 
	On the other hand, when $p = 1$, $x\neq y$ and $\alpha\in (0,1)$, either $|x|\neq |y|$ or the inequality $|\alpha x+(1-\alpha)y|\le \alpha|x|+(1-\alpha)|y|$ is strict, so that, since $\theta$ is increasing and strictly convex, $\theta  \circ (| \cdot | / 2)$ is again strictly convex.
	We conclude that in any case $\theta \circ (| \cdot |^p / 2^p)$ is strictly convex.

	Consider the transport problem $\mathcal W_\theta$ given by
	\begin{equation}\textstyle
		\label{eq:continuity_W_projection.W_theta}
		\mathcal W_\theta(\eta,\nu) := \inf_{ \pi \in \Pi(\mu, \nu)} \int \theta \left( \frac{|x - y|^p}{2^p} \right) \, \pi(dx,dy),
	\end{equation}
	for $\eta,\nu \in \mathcal P_p(\R)$, and observe that
	\[ \textstyle
		\mathcal W_\theta(\eta,\nu) 
		\le \int \theta \left( \frac{|x|^p + |y|^p}{2} \right) \, \eta \otimes \nu(dx,dy) 
		\le \frac12 \left( \int \theta(|x|^p) \, \eta(dx) + \int \theta(|y|^p) \, \nu(dy) \right).
	\]
	We have by \cite[Theorem 1.4]{BaBePa19} that
	\begin{align*}
		\mathcal V_\theta(\mu,\nu) := \inf_{\mu \le_{c} \eta} \mathcal W_\theta(\eta,\nu) = \mathcal W_\theta(\mathcal J(\mu, \nu), \nu) \le \mathcal W_\theta(\mu, \nu),
	\end{align*}
	from which we deduce that $(\mathcal V_\theta(\mu_n,\nu_n))_{n \in \N}$ is a bounded sequence.
	For $n \in \N$, let $\pi_n$ be an optimizer of $\mathcal W_\theta(\mathcal J(\mu_n,\nu_n), \nu_n)$ in $\Pi(\mathcal J(\mu_n,\nu_n), \nu_n)$. We then find by monotonicity and convexity of $\theta$
	\begin{align*}
		\text{$\textstyle \int \theta \left( \frac{|x|^p}{4^p} \right) \, \mathcal J(\mu_n,\nu_n)(dx)$}&\text{$\textstyle \le \int \theta \left( \frac{|x-y|^p+|y|^p}{2\times 2^p} \right) \, \pi_n(dx,dy) $}\\
		&\text{$\textstyle \le
		\frac12 \left( \mathcal V_\theta(\mu_n,\nu_n) + \int \theta\left(\frac{|y|^p}{2^p}\right)\nu_n(dy)\right)$}
		\\
		&\text{$\textstyle \le\frac12 \left( \mathcal V_\theta(\mu_n,\nu_n) + \int \theta\left(|y|^p\right)\nu_n(dy)\right).$}
	\end{align*}
	Therefore, the left-hand side is uniformly bounded in $n \in \N$.
	Recall that $\frac{r}{\theta(r)}$ vanishes for $r \to \infty$ and so does $\sup_{s \ge r} \frac{s}{\theta(s)}$.
	Since 
	\[ \textstyle
		\int \mathbbm 1_{[r,\infty)} \left(\frac{|x|^p}{4^p}\right) \frac{|x|^p}{4^p} \, \mathcal J(\mu_n,\nu_n) (dx)
		\le \sup_{s \ge r}\frac{s}{\theta(s)}\int \theta\left(\frac{|x|^p}{4^p}\right)\, \mathcal J(\mu_n,\nu_n)(dx),
	\]
with the integrals on the right-hand side uniformly bounded in $n \in \N$, we deduce that
	\[ \textstyle
		\lim_{r \to \infty}\sup_{n \in \N} \int \mathbbm 1_{[r,\infty)}(|x|^p) |x|^p \, \mathcal J(\mu_n,\nu_n)(dx) 	
		= 0.
	\]
	In particular, by Markov's inequality, we get that the sequence is tight and by \cite[Definition 6.8]{Vi09} precompact in $\mathcal P_p(\R)$.

	\emph{Step 2}. Precompactness allows us to pass to a subsequence convergent in $\mathcal P_p(\R)$ with limit $\gamma$.
	Consider the continuous, increasing, and strictly convex function $\hat \theta(x) := \sqrt{x^{2p} + 1}$ on $\R_+$ with $\hat \theta(x) \le x^p + 1$.
	By stability, that is \cite[Theorem 1.5]{BaBePa19}, we obtain
	\[
		\mathcal V_{\hat \theta}(\mu,\nu) = \lim_{n \to \infty} \mathcal V_{\hat \theta}(\mu_n,\nu_n)
		= \lim_{n \to \infty} \mathcal W_{\hat \theta}(\mathcal J(\mu_n,\nu_n),\nu_n) = \mathcal W_{\hat \theta}(\gamma, \nu).	
	\]
	Since $\mathcal W_p$-convergence preserves convex ordering, we get that $\mu \le_{c} \gamma$ and by uniqueness of the optimizer of $\mathcal V_{\hat \theta}(\mu,\nu)$, $\gamma = \mathcal J(\mu,\nu)$.
	Hence, $(\mathcal J(\mu_n,\nu_n))_{n \in \N}$ converges to $\mathcal J(\mu,\nu)$ in $\mathcal P_p(\R)$.
      \end{proof}
      The proof of Proposition \ref{prop:convex_hull_contracting} relies on the next three lemmas. The proof of the first one is postponed after the one of the proposition.
\begin{lemma} \label{lem:convex_affine}
	Let $0\le a<b$ and $F,G:[0,b]\to\R$ be continuous on $[0,b]$, convex on $[0,a)$ and affine on $[a,b]$. Then for any increasing convex function $\theta \colon \R \to \R$  we have
	\begin{equation}\label{eq:convex_affine.assertion}
	\int_0^b \theta\left( \vert \partial_+(\co(F)-\co(G))\vert \right)(u) \, du \leq \int_0^b \theta\left( \vert \partial_+(F-G) \vert \right)(u) \, du
	\end{equation}
	where $\co$ denotes the convex hull and $\partial_+$ the right-hand derivative.
\end{lemma}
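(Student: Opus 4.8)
The plan is to reduce the statement to a one–parameter family of elementary integrands and then to exploit the very simple geometry of the convex hull of a convex–then–affine function. Since $\theta$ is increasing and convex, it admits on $[0,\infty)$ the representation $\theta(r)=\theta(0)+\int_{[0,\infty)}(r-s)_+\,\nu(ds)$ for some nonnegative measure $\nu$; applying this with $r=|\partial_+(\co(F)-\co(G))|$ and $r=|\partial_+(F-G)|$ and using Tonelli's theorem, it is enough to prove, for every fixed $s\ge 0$,
\[
\int_0^b\big(|\partial_+(\co(F)-\co(G))|(u)-s\big)_+\,du\ \le\ \int_0^b\big(|\partial_+(F-G)|(u)-s\big)_+\,du .
\]

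Next I would record the shape of the convex hulls. A continuous function that is convex on $[0,a]$ and affine on $[a,b]$ has a convex hull that agrees with the function on an initial interval $[0,c]$ and is an affine chord on $[c,b]$ (tangent at $c$); call these contact points $c_F\le a$ and $c_G\le a$, with the convention $c_F=b$ (resp.\ $c_G=b$) when $F$ (resp.\ $G$) is already convex, so that the degenerate cases are included. By symmetry of the asserted inequality in $F,G$ we may assume $c_F\le c_G$. Set $A:=\partial_+(\co(F)-\co(G))$ and $B:=\partial_+(F-G)$. On $[0,c_F]$ the convex hulls coincide with $F$ and $G$, so $A=B$ there and that part contributes equally to both sides. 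On $[c_F,b]$ the function $\co(F)$ is affine, of some slope $m_F$, while $\co(G)$ is convex; hence $A=m_F-\partial_+\co(G)$ is non-increasing, and since $\partial_+\co(G)$ is constant on $[c_G,b]$ so is $A$. Finally $\co(F)=F$ and $\co(G)=G$ at the endpoints $c_F$ and $b$, so $\int_{c_F}^b A=\int_{c_F}^b B$.

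Then I would establish the displayed inequality for a fixed $s$. As $A$ is non-increasing on $[c_F,b]$, the sets $\{A>s\}$ and $\{A<-s\}$ are intervals $[c_F,\sigma_1)$ and $(\sigma_2,b]$ with $c_F\le\sigma_1\le\sigma_2\le b$. Using $(|A|-s)_+=(A-s)_+ +(-A-s)_+$ together with the pointwise bounds $(|B|-s)_+\ge(B-s)\mathbbm 1_{[c_F,\sigma_1)}$ and $(|B|-s)_+\ge(-B-s)\mathbbm 1_{(\sigma_2,b]}$, one gets
\[
\int_{c_F}^b(|B|-s)_+-\int_{c_F}^b(|A|-s)_+\ \ge\ \int_{c_F}^{\sigma_1}(B-A)\,-\int_{\sigma_2}^b(B-A);
\]
since an antiderivative of $B-A$ is $(F-\co(F))-(G-\co(G))$, which vanishes at $c_F$ and at $b$, the right-hand side equals $(F-\co(F))(\sigma_1)+(F-\co(F))(\sigma_2)-(G-\co(G))(\sigma_1)-(G-\co(G))(\sigma_2)$. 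Because $A$ is constant on $[c_G,b]$, each $\sigma_i$ is either $\le c_G$ or equal to $b$; using that $F-\co(F)\ge 0$, $G-\co(G)\ge 0$, that $G-\co(G)$ vanishes on $[0,c_G]$, and that both vanish at $b$, every bracket $(F-\co(F))(\sigma_i)-(G-\co(G))(\sigma_i)$ is $\ge 0$, which completes the argument (note that $\sigma_1=b$ forces $\sigma_2=b$ and the value $0$).

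The step I expect to require the most care is the bookkeeping in the last part: verifying that the boundaries $\sigma_1,\sigma_2$ of the level sets of the monotone function $A$ cannot lie strictly inside $[c_G,b]$, and checking the structural description of $\co(F)$ and $\co(G)$ along with all degenerate configurations (one of $F,G$ convex, $a=0$, possible jumps of $\partial_+F,\partial_+G$ at $a$ or at the contact points), so that the "function, then chord" picture and the conventions above are legitimate. Once these are in hand, the estimate for each $s$ is a one–line computation.
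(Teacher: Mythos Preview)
Your argument is correct and follows a genuinely different route from the paper's own proof.

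The paper works with the increasing convex $\theta$ directly: after deriving the same structural description of $\co(F)$ and $\co(G)$ (contact points $c\le d$, affine slopes $\phi,\gamma$), it splits into the two cases $\phi\ge\gamma$ and $\phi<\gamma$ and, in each, combines a pointwise convexity estimate (their Lemma~\ref{lem:convex_est}) with Jensen's inequality and, in the second case, a reduction to the increasing convex order via quantile functions (their Lemma~\ref{lem:quantile Function increasing ordering}). Your approach instead linearizes the problem: using $\theta(r)=\theta(0)+\int_{[0,\infty)}(r-s)_+\,\nu(ds)$ (legitimate since $\theta'_+(0)\ge 0$ can be absorbed as a Dirac at $0$), you reduce to the single family $\theta_s(r)=(r-s)_+$ and then exploit that $A=\partial_+(\co(F)-\co(G))$ is monotone on $[c_F,b]$ and constant on $[c_G,b]$, so its superlevel and sublevel sets are intervals whose inner endpoints lie either in $[c_F,c_G]$ or at $b$. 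The bound then follows from the single identity $(F-\co(F))-(G-\co(G))=0$ at $c_F$ and $b$ together with $(G-\co(G))\equiv 0$ on $[0,c_G]$ and $F-\co(F)\ge 0$. Your proof is shorter, avoids any case distinction on the sign of $\phi-\gamma$, and needs no auxiliary lemmas on stochastic orders; the paper's proof, by contrast, keeps $\theta$ arbitrary throughout but pays with a more intricate case analysis. The bookkeeping you flag (degenerate $c_F,c_G$, right-continuity of $A$, half-open level sets) is routine and does not affect the validity of the argument.
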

\begin{lemma}\label{lem:convex hull composition} Let $0\le a<b<\infty$, $F:[0,b]\to\R$, and
	\[ 
		H(x):=\begin{cases}
   \co(F\vert_{[0,a)})(x)\mbox{ if }x\in[0,a)\\F(x)\mbox{ if } x\in[a,b].
\end{cases}
\]
Then $\co(H)=\co(F)$.
\end{lemma}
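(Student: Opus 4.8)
The plan is to use nothing more than the defining extremal property of the convex-hull operator: for any function $\Phi$ on an interval $I$, $\co(\Phi)$ is the pointwise supremum of all convex functions on $I$ lying below $\Phi$; hence $\co(\Phi)$ is itself convex, satisfies $\co(\Phi)\le\Phi$, and any convex function dominated by $\Phi$ is automatically dominated by $\co(\Phi)$. With this in hand the statement splits into the two pointwise inequalities $\co(H)\le\co(F)$ and $\co(F)\le\co(H)$ on $[0,b]$, each of which is established by a short comparison.

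For the inequality $\co(H)\le\co(F)$, I would first note that $H\le F$ on all of $[0,b]$: on $[a,b]$ this is the equality $H=F$, and on $[0,a)$ it is the elementary bound $H=\co(F\vert_{[0,a)})\le F\vert_{[0,a)}$. Consequently $\co(H)\le H\le F$ on $[0,b]$, and since $\co(H)$ is convex, the extremal property of $\co(F)$ forces $\co(H)\le\co(F)$.

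For the reverse inequality $\co(F)\le\co(H)$, I would show $\co(F)\le H$ pointwise on $[0,b]$. On $[a,b]$ this is immediate from $H=F\ge\co(F)$. On $[0,a)$, observe that the restriction $\co(F)\vert_{[0,a)}$ is a convex function on $[0,a)$ which is dominated by $F\vert_{[0,a)}$ (because $\co(F)\le F$ on $[0,b]$); by the extremal property applied on the interval $[0,a)$ this yields $\co(F)\vert_{[0,a)}\le\co(F\vert_{[0,a)})=H\vert_{[0,a)}$. Hence $\co(F)\le H$ on $[0,b]$, and since $\co(F)$ is convex it lies below $\co(H)$, the largest convex minorant of $H$. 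Combining the two inequalities gives $\co(H)=\co(F)$.

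This argument is entirely soft, so I do not expect a genuine obstacle; the only point that requires a moment's attention is that $\co(F\vert_{[0,a)})$ is taken over the \emph{half-open} interval $[0,a)$, so that the value $F(a)$ imposes no constraint on it. This is precisely why $H$ is defined by grafting $\co(F\vert_{[0,a)})$ onto $F\vert_{[a,b]}$, and the comparison argument above is insensitive to this subtlety. In the application of the lemma $F$ is continuous, so all convex hulls are finite-valued; but even in general the same chain of inequalities holds in the extended-real-valued sense.
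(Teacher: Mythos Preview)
Your proof is correct and follows essentially the same approach as the paper's: both directions are obtained by the extremal characterization of the convex hull, first using $H\le F$ to get $\co(H)\le\co(F)$, then observing that the restriction $\co(F)\vert_{[0,a)}$ is a convex minorant of $F\vert_{[0,a)}$ to deduce $\co(F)\le H$ and hence $\co(F)\le\co(H)$.
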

\begin{proof}[Proof of Lemma \ref{lem:convex hull composition}]
The function $\co(H)$ is convex and satisfies $\co(H)\le H\le F$. By definition of the convex hull, we deduce that $\co(H)\le \co(F)$. Conversely, $\co(F)$ is convex and bounded from above by $F$, so that the restriction $\co(F)\vert_{[0,a)}$ is convex and bounded from above by $F\vert_{[0,a)}$. Hence $\co(F)\vert_{[0,a)}\le \co(F\vert_{[0,a)})$ and $\co(F)\le H$. By definition of the convex hull, $\co(F)\le\co(H)$, which concludes the proof.
\end{proof}
\begin{lemma}
	\label{lem:cadlag_approx}
	Let $f \colon [0,1] \to \R$ be a c\`adl\`ag function.
	Then there exists for every $\epsilon > 0$ a piecewise constant, c\`adl\`ag function $g \colon [0,1] \to \R$ with at most finitely many jumps such that $\sup_{x \in [0,1]} |f(x) - g(x)| < \epsilon$.
\end{lemma}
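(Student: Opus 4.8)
The statement to prove is Lemma~\ref{lem:cadlag_approx}: every càdlàg function $f\colon[0,1]\to\R$ can be uniformly approximated by a piecewise-constant càdlàg function with finitely many jumps. The plan is to exploit the defining regularity of càdlàg functions — existence of right limits everywhere and left limits everywhere — to control oscillation on small intervals, and then to build a finite partition on which $f$ oscillates by less than $\epsilon$.

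**Main step: a finite partition with small oscillation.** Fix $\epsilon>0$. For each $t\in[0,1)$, right-continuity at $t$ gives $\delta^+_t>0$ with $|f(s)-f(t)|<\epsilon/2$ for $s\in[t,t+\delta^+_t)$; for each $t\in(0,1]$, existence of the left limit $f(t-)$ gives $\delta^-_t>0$ with $|f(s)-f(t-)|<\epsilon/2$ for $s\in(t-\delta^-_t,t)$ (and for $t=0$ we only use the right-sided estimate, for $t=1$ only the left-sided one). The open intervals $\bigl(t-\delta^-_t,\, t+\delta^+_t\bigr)$ (with the obvious one-sided modification at the endpoints $0,1$) cover the compact set $[0,1]$, so finitely many of them suffice. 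From this finite cover I extract a finite partition $0=t_0<t_1<\dots<t_N=1$ with the property that on each half-open interval $[t_{k},t_{k+1})$ the oscillation of $f$ is at most $\epsilon$: indeed each such interval can be taken to lie inside the "right part" $[t,t+\delta^+_t)$ of one of the chosen covering intervals, or inside the "left part" $(t-\delta^-_t,t)$ of another, possibly after subdividing at the relevant points $t$; on the right part $f$ is within $\epsilon/2$ of $f(t)$, and on the left part $f$ is within $\epsilon/2$ of $f(t-)$, so in either case any two values differ by less than $\epsilon$. (One can alternatively phrase this via the standard fact that a càdlàg function has, for every $\epsilon$, only finitely many jumps of size $\geq\epsilon$, and is "$\epsilon$-close to piecewise constant" between consecutive such jumps; but the covering argument is cleaner and self-contained.)

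**Defining $g$ and concluding.** Set $g(x):=f(t_k)$ for $x\in[t_k,t_{k+1})$, $k=0,\dots,N-1$, and $g(1):=f(1)$. Then $g$ is piecewise constant with at most $N$ jumps, right-continuous by construction, and has left limits (it is a finite step function), hence càdlàg. For $x\in[t_k,t_{k+1})$ we have $x,t_k$ both in $[t_k,t_{k+1})$, on which $f$ oscillates by less than $\epsilon$, so $|f(x)-g(x)|=|f(x)-f(t_k)|<\epsilon$; at $x=1$ equality $g(1)=f(1)$ gives the bound trivially. Hence $\sup_{x\in[0,1]}|f(x)-g(x)|<\epsilon$, as required.

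**Anticipated obstacle.** The only delicate point is the bookkeeping in the extraction of the finite partition: a single point $t$ that is the "center" of a covering interval contributes both a right part on which $f\approx f(t)$ and a left part on which $f\approx f(t-)$, and these two constants generally differ (precisely at the jumps of $f$), so $t$ itself must be inserted as one of the partition points $t_k$. Making sure that every half-open interval $[t_k,t_{k+1})$ of the final partition is contained in \emph{one} right part $[t,t+\delta_t^+)$ — rather than straddling a jump — is the crux; this is arranged by including in the partition all the (finitely many) centers $t$ of the selected covering intervals together with their endpoints, and then noting that between two consecutive partition points the interval lies in the right part of some covering interval. Everything else is routine.
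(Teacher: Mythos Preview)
Your argument is correct. The paper does not give a proof at all: it simply invokes \cite[Section 12, Lemma 1]{Bi99}, whereas you supply the standard self-contained argument that underlies that reference --- using the c\`adl\`ag property to produce, for each $t$, one-sided neighbourhoods on which $f$ is within $\epsilon/2$ of $f(t)$ (right side) or $f(t-)$ (left side), extracting a finite subcover by compactness, and refining to a partition on whose half-open cells $f$ oscillates by less than $\epsilon$.

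Two small remarks. First, in your ``anticipated obstacle'' paragraph you say each cell ``lies in the right part of some covering interval''; this contradicts what you (correctly) wrote earlier, namely that a cell may lie in a \emph{left} part $(t-\delta_t^-,t)$ instead. Second, when a cell $[t_k,t_{k+1})$ sits inside a left part $(t-\delta_t^-,t)$, the left endpoint $t_k$ could coincide with $t-\delta_t^-$ and hence fall outside the open interval; this is harmless because right-continuity of $f$ at $t_k$ gives $|f(t_k)-f(t-)|\le\epsilon/2$ as a limit of the strict bound on $(t_k,t_{k+1})$, so the estimate $|f(x)-f(t_k)|<\epsilon$ for $x\in[t_k,t_{k+1})$ still holds. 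Neither point is a genuine gap.
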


\begin{proof}[Proof of Lemma \ref{lem:cadlag_approx}]
	This is follows from \cite[Section 12, Lemma 1]{Bi99} and the discussion below.
\end{proof}

\begin{proof}[Proof of Proposition \ref{prop:convex_hull_contracting}]
	For the moment we assume that the assertion of the proposition holds true for antiderivatives of piecewise constant, càdlàg functions.
	Since $f$ and $g$ are càdlàg, there exist by Lemma \ref{lem:cadlag_approx} for every $n \in \N^*$ piecewise constant, càdlàg functions $f_n, g_n \colon [0,1]\to\R$ with finitely many discontinuities such that
	\[
		\sup_{x \in [0,1] }
		\left\{\vert f(x)-f_n(x)\vert+\vert g(x)- g_n(x)\vert\right\}<1/n.
	\]
	Therefore, $(f_n)_{n \in \N^*}$ and $(g_n)_{n \in \N^*}$ converge in $L^p(\lambda)$ to $f$ and $g$, respectively.
	Let, for $n \in \N^*, u \in [0,1]$, 
	\[
		F_n(u) := F(0) + \int_0^u f_n(v) \, dv \mbox{ and }G_n(u) := G(0) + \int_0^u g_n(v) \, dv.
	\]
	We have $\| F - F_n \|_\infty <\frac1n$, $\| G - G_n \|_\infty < \frac1n$. Since $\co(F)-\| F - F_n \|_\infty$ (resp.\ $\co(F_n)-\| F - F_n \|_\infty$) is a convex function bounded from above by $F-\| F - F_n \|_\infty\le F_n$ (resp.\ $F$), $\| \co(F) - \co(F_n) \|_\infty \le \| F - F_n \|_\infty < \frac1n$ and, in the same way, $\| \co(G) - \co(G_n) \|_\infty < \frac1n$.
	By \cite[Theorem 6.2.7]{HiLe04}, we have $\lambda$-almost sure convergence of $(\partial_+ \co(F_n))_{n \in \N^*}$ and $(\partial_+ \co(G_n))_{n \in \N^*}$ to $\partial_+ \co(F)$ and $\partial_+ \co(G)$, respectively.
	Again, as $f$ and $g$ are càdlàg, we have $\max ( \|f\|_\infty, \|g \|_\infty ) + 1 =: K < \infty$,
	and $\max ( \|f_n\|_\infty, \|g_n\|_\infty) \le K$ for all $n \in \N^*$, which yields by definition of the convex hull that $\co(F_n)(u) \ge F_n(0) - Ku$ and $\co(F_n)(u) \ge F_n(1) - K(1 - u)$.
	Hence,
	\[
		\frac{\co(F_n)(u) - \co(F_n(0))}{u} \ge -K, \quad
		\frac{\co(F_n)(1) - \co(F_n)(u)}{1 - u} \le K,
	\]
	and by monotonicity of the one-sided derivatives (and the same reasoning for $G_n$) we obtain that $\max( \|\partial_+ \co(F_n)\|_\infty, \|\partial_+ \co(G_n)\|_\infty) \le K$.
	Then dominated convergence yields that $(\partial_+ \co(F_n))_{n \in \N^*}$ and $(\partial_+ \co(G_n))_{n \in \N^*}$ converge in $L^p(\lambda)$ to $\partial_+ \co(F)$ and $\partial_+ \co(G)$, respectively.
	Finally, by applying \eqref{eq:convex_hull_contracting.assertion} and the triangle inequality we get the desired inequality:
	\begin{align*}
		\left\Vert\partial_+(\co(F)-\co(G))\right\Vert_p
		&= \lim_{n\to \infty} \left\Vert\partial_+(\co(F_n)-\co(G_n))\right\Vert_p
		\\
		&\le \lim_{n\to+\infty} \left\Vert f_n - g_n \right\Vert_p = \left\Vert f	 - g \right\Vert_p.
	\end{align*}
	
	It remains to show \eqref{eq:convex_hull_contracting.assertion} for piecewise constant, càdlàg functions $f$ and $g$ with finitely many jumps.
	To this end, let $(a_k)_{0\le k\le n}$ be a partition of $[0,1]$ adapted to $f$ and $g$, i.e., $0 = a_0 < \ldots < a_n = 1$ and for all $k\in\{0,\cdots,n-1\}$, $f\vert_{[a_k,a_{k+1})}$ and $g\vert_{[a_k,a_{k+1})}$ are constant. For $k\in\{0,\cdots,n\}$, we consider
	\begin{align*}
		F^k:x\mapsto&\begin{cases}\co(F\vert_{[0,a_k)})(x) &\text{if } x \in [0,a_k), \\ F(x) & \text{else;}\end{cases}
		\\
		G^k:x\mapsto &\begin{cases}
	\co(G\vert_{[0,a_k)})(x)&\text{if } x \in [0,a_k), \\ G(x) & \text{else,}
	\end{cases}
	\end{align*}
	and write $f^k = \partial_+ F^k$ and $g^k = \partial_+ G^k$.
	
	Note that $F^0 = F^1 = F$, $G^0 =G^1 = G$ and $F^n = \co(F)$, $G^n = \co(G)$.
	By induction we will show that, for $k \in \{0,\ldots n - 1\}$.
	\begin{equation}
		\label{eq:convex_hull_contracting.induction}
		\Vert f^{k+1} - g^{k+1} \Vert_p \leq \Vert f^k - g^k\Vert_p.
	\end{equation}
	As the initial case is trivial, we assume that \eqref{eq:convex_hull_contracting.induction} holds for $0\le k\leq n-2$.
	First, observe	
	\begin{align*}
		\nonumber
	\left\Vert f^{k+1} - g^{k+1}\right\Vert_p^p &= \left\Vert (f^{k+1} - g^{k+1})\vert_{[0,a_{k+1})}\right\Vert_p^p + \left\Vert (f^{k+1}-g^{k+1})\vert_{[a_{k+1},1]}\right\Vert_p^p
	\\
	&=\left\Vert \partial_+ \left( \co(F\vert_{[0,a_{k+1})})-\co(G\vert_{[0,a_{k+1})}) \right) \right\Vert_p^p + \left\Vert (f^k - g^k)\vert_{[a_{k+1},1]}\right\Vert_p^p.
	\end{align*}
	Applying Lemma \ref{lem:convex hull composition} with $a=a_k$, $b=a_{k+1}$ yields $\co(F\vert_{[0,a_{k+1})})=\co(F^k\vert_{[0,a_{k+1})})$ and $\co(G\vert_{[0,a_{k+1})})=\co(G^k\vert_{[0,a_{k+1})})$, so that,
	\[ \textstyle
		\left\Vert f^{k+1} - g^{k+1}\right\Vert_p^p=\left\Vert \partial_+ \left( \co(F^k\vert_{[0,a_{k+1})})-\co(G^k\vert_{[0,a_{k+1})}) \right) \right\Vert_p^p + \left\Vert (f^k - g^k)\vert_{[a_{k+1},1]}\right\Vert_p^p
	\]
	Since $f$ and $g$ are absolutely bounded by some constant $C > 0$, we have $\co(F\vert_{[0,a_{k})})(u) \ge F(a_k) - C(a_k -u  )$, $\co(G\vert_{[0,a_{k+1})})(u) \ge G(a_k) - C( a_k-u )$, thus, $\lim_{u \nearrow a_k}\co(F\vert_{[0,a_{k})})(u) = F(a_k)$ and $\lim_{u \nearrow a_k} \co(G\vert_{[0,a_{k})})(u) = G(a_k)$.
	In particular, $F^k$ and $G^k$ are continuous.
	We can apply Lemma \ref{lem:convex_affine} with $a=a_k$, $b=a_{k+1}$ 
        to obtain
	\begin{align*}
		\left\Vert \partial_+(\co(F^k\vert_{[0,a_{k+1})}) - \co(G^k\vert_{[0,a_{k+1})}))\right\Vert_p
		&\le\left\Vert (f^k-g^k)\vert_{[0,a_{k+1})}\right\Vert_p,
	\end{align*}
	from which we deduce \eqref{eq:convex_hull_contracting.induction}.
	In particular, we have shown the assertion:
	\[
		\left\Vert \partial_+ (\co(F) - \co(G)) \right\Vert_p = \left\Vert f^n-g^n \right\Vert_p \le \left \Vert f^0 - g^0 \right \Vert_p =\left\Vert f - g \right\Vert_p.\qedhere
	\]
\end{proof}
The proof of Lemma \ref{lem:convex_affine} relies on the next two lemmas.
   \begin{lemma}
	\label{lem:convex_est}
	Let $\theta \colon \R \to \R$ be convex, $x,z \in \R$ with $x < z$, and $y, \hat y \in [x,z]$.
	Then
	\begin{equation}\textstyle
		\label{eq:convex_est.assertion}
		\theta(y) - \theta(x) \le \frac{y - x}{z - \hat y} \left( \theta(z) - \theta(\hat y) \right).
	\end{equation}
\end{lemma}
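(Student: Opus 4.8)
The plan is to compare both differences appearing in \eqref{eq:convex_est.assertion} to the single chord of $\theta$ over the full interval $[x,z]$, using only the defining property that a convex function lies below each of its chords. Throughout write $c := \tfrac{\theta(z)-\theta(x)}{z-x}$ for the slope of that chord.

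First I would dispose of the degenerate cases: if $\hat y = z$, the right-hand side of \eqref{eq:convex_est.assertion} is $+\infty$ (or the inequality is to be read vacuously), so we may assume $\hat y < z$; and if $y = x$, both sides of \eqref{eq:convex_est.assertion} vanish, so we may assume $x < y$. It then suffices to establish the two estimates
\[
	\theta(y) - \theta(x) \le c\,(y - x)
	\qquad\text{and}\qquad
	c\,(z - \hat y) \le \theta(z) - \theta(\hat y),
\]
because, chaining them and using $y - x > 0$ and $z - \hat y > 0$, one gets $\theta(y)-\theta(x) \le c(y-x) = \tfrac{y-x}{z-\hat y}\,c\,(z-\hat y) \le \tfrac{y-x}{z-\hat y}\bigl(\theta(z)-\theta(\hat y)\bigr)$, which is \eqref{eq:convex_est.assertion}. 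For the first estimate, since $y \in [x,z]$ I would write $y = (1-t)x + tz$ with $t = \tfrac{y-x}{z-x}\in[0,1]$; convexity of $\theta$ gives $\theta(y) \le (1-t)\theta(x) + t\theta(z)$, and subtracting $\theta(x)$ yields $\theta(y)-\theta(x) \le t\bigl(\theta(z)-\theta(x)\bigr) = c\,(y-x)$. For the second estimate, since $\hat y \in [x,z]$ I would write $\hat y = (1-s)x + sz$ with $s = \tfrac{\hat y - x}{z-x}\in[0,1]$; convexity gives $\theta(\hat y) \le (1-s)\theta(x)+s\theta(z)$, whence $\theta(z)-\theta(\hat y) \ge (1-s)\bigl(\theta(z)-\theta(x)\bigr) = (1-s)(z-x)\,c = (z-\hat y)\,c$.

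I expect no genuine obstacle here: the argument is just a twofold application of the convexity inequality, and the only points needing attention are the bookkeeping of the degenerate endpoints $y=x$ and $\hat y=z$ (so that the division by $z-\hat y$ is legitimate) and the observation that the barycentric coefficients $t$ and $s$ automatically lie in $[0,1]$ precisely because $y,\hat y\in[x,z]$.
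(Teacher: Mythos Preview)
Your argument is correct and is essentially the same as the paper's: both proofs establish the two chord inequalities
\[
\theta(y)-\theta(x)\le \frac{y-x}{z-x}\bigl(\theta(z)-\theta(x)\bigr)
\qquad\text{and}\qquad
\frac{z-\hat y}{z-x}\bigl(\theta(z)-\theta(x)\bigr)\le \theta(z)-\theta(\hat y),
\]
which are precisely your $\theta(y)-\theta(x)\le c(y-x)$ and $c(z-\hat y)\le\theta(z)-\theta(\hat y)$ after introducing $c=\frac{\theta(z)-\theta(x)}{z-x}$, and then chain them. One quibble: in the degenerate case $\hat y=z$ the right-hand side of \eqref{eq:convex_est.assertion} is not $+\infty$ but the indeterminate form $\frac{y-x}{0}\cdot 0$, so ``vacuous'' is not quite the right description either; the clean way out is to note that in every application (and in the paper's own proof) one may simply assume $\hat y<z$.
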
 \begin{proof}[Proof of Lemma \ref{lem:convex_est}]
	Since $\theta$ is convex, we have
	\[	\textstyle
		\theta(y) - \theta(x) \le \frac{y - x}{z - x} \left( \theta(z) - \theta(x) \right) 
		\mbox{ and }
		\frac{z - \hat y}{z - x} \left( \theta(z) - \theta(x) \right) \le \theta(z) - \theta(\hat y).
	\]
	Combining these two inequalities yields	\eqref{eq:convex_est.assertion}.
\end{proof}

\begin{lemma}\label{lem:quantile Function increasing ordering} 
	Let $\mu\in\mathcal P_1(\R)$ and $f : \R \to \R$ be a measurable map such that $\mu$ is equal to the image $f_\# \lambda$ of the Lebesgue measure $\lambda$ on $(0,1)$ by $f$.
    Then
	\[ \textstyle
	   \forall v\in [0,1], \quad \int_v^1f(u)\,du\le\int_v^1F_\mu^{-1}(u)\,du.\]
\end{lemma}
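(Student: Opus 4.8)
The plan is to reduce the claim to the well-known fact that, among all measurable maps $f$ with $f_\#\lambda = \mu$, the quantile function $F_\mu^{-1}$ is the monotone rearrangement, together with the Hardy--Littlewood type inequality that rearrangement maximizes integrals over upper level sets. Concretely, fix $v \in [0,1]$ and write $m := 1-v$, the Lebesgue measure of $(v,1)$. The right-hand side $\int_v^1 F_\mu^{-1}(u)\,du$ is the integral of $F_\mu^{-1}$ over an interval of length $m$ sitting at the \emph{top} of $(0,1)$, i.e.\ over the set where $F_\mu^{-1}$ takes its $m$ largest values (since $F_\mu^{-1}$ is nondecreasing). The left-hand side $\int_v^1 f(u)\,du = \int_{(v,1)} f\,d\lambda$ is the integral of $f$ over \emph{some} set of measure $m$. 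So the inequality will follow once we know: for any measurable set $A \subseteq (0,1)$ with $\lambda(A) = m$, $\int_A f\,d\lambda \le \int_{1-m}^1 F_\mu^{-1}(u)\,du$.

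First I would establish this last inequality. Both sides depend on $f$ only through the law $\mu = f_\#\lambda$ in the following averaged sense: $\int_A f\,d\lambda \le \int_0^1 \mathbbm 1_A(u)\, f(u)\,du$, and I bound this using the layer-cake / bathtub principle. Write $f(u) = f(u)_+ - f(u)_-$; it suffices to treat the relevant truncations, or more cleanly: for any $c \in \R$,
\[
\int_A f\,d\lambda = \int_A (f-c)\,d\lambda + c\,m \le \int_A (f-c)_+\,d\lambda + c\,m \le \int_{\{f > c\}}(f-c)\,d\lambda + c\,m,
\]
where the last step uses $\lambda(A) = m$ and, if $\lambda(\{f>c\}) \le m$, that we may only help ourselves by also integrating over part of $\{f = c\}$; choosing $c$ to be the upper $m$-quantile of $\mu$, namely $c = F_\mu^{-1}(1-m)$ (with the convention handling the atom at $c$), makes $\{f > c\}$ together with an appropriate portion of $\{f=c\}$ a set of measure exactly $m$ on which $f \ge c$, and there $\int (f-c)\,d\lambda + cm = \int_{1-m}^1 F_\mu^{-1}(u)\,du$ by the change of variables $F_\mu^{-1} = $ rearrangement of $f$. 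I would phrase this via $F_\mu^{-1}(1-m) = \inf\{x : \mu((-\infty,x]) \ge 1-m\}$ and the identity $\int_{1-m}^1 F_\mu^{-1}(u)\,du = \int_\R (y - c)_+\,\mu(dy) + c\,m$ when $c = F_\mu^{-1}(1-m)$, which holds because $\mu((c,\infty)) \le m \le \mu([c,\infty))$.

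Once the set-version is in hand, applying it with $A = (v,1)$ and $m = 1-v$ gives exactly $\int_v^1 f(u)\,du \le \int_{v}^1 F_\mu^{-1}(u)\,du$, which is the assertion. The step I expect to be the main obstacle is handling the boundary case where $\mu$ has an atom at the threshold value $c = F_\mu^{-1}(1-m)$: then $\{f = c\}$ has positive measure, $A$ may overlap it only partially, and one must be careful that redistributing mass within $\{f=c\}$ changes neither side, so that the comparison $\int_A f\,d\lambda \le \int_{1-m}^1 F_\mu^{-1}$ remains valid (with equality achievable). This is a routine but slightly fiddly measure-theoretic argument; an alternative that avoids it entirely is to invoke directly the classical Hardy--Littlewood inequality $\int_A f \le \int_0^{\lambda(A)} f^*$ for the decreasing rearrangement $f^*$, note $f^*(u) = F_\mu^{-1}(1-u)$, and substitute — I would likely present the short layer-cake computation but remark that it is exactly Hardy--Littlewood.
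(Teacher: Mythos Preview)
Your argument is correct. The concern about atoms at the threshold $c=F_\mu^{-1}(1-m)$ is unnecessary: the chain
\[
\textstyle
\int_A f\,d\lambda \le \int_{(0,1)}(f-c)_+\,d\lambda + cm = \int_\R (y-c)_+\,\mu(dy) + cm = \int_0^1 (F_\mu^{-1}-c)_+\,du + cm = \int_{1-m}^1 F_\mu^{-1}(u)\,du
\]
goes through directly, the last equality because $F_\mu^{-1}(u)\ge c$ for $u>1-m$ and $F_\mu^{-1}(u)\le c$ for $u\le 1-m$. (The sentence ``$\int_A f\,d\lambda \le \int_0^1 \mathbbm 1_A(u)\, f(u)\,du$'' in your write-up should be an equality, and ``depends on $f$ only through $\mu$'' is not true of $\int_A f$; but these slips do not affect the argument.)

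The paper proceeds differently. It fixes $v$, sets $c(x,y)=\mathbbm 1_{\{x<1-v\}}y$, checks the submodularity inequality $c(x',y')-c(x,y')-c(x',y)+c(x,y)\le 0$ for $x\le x'$, $y\le y'$, and observes that $(1-U,f(U))$ and $(1-U,F_\mu^{-1}(U))$ are two couplings of the same marginals (Lebesgue on $(0,1)$ and $\mu$). The Hoeffding--Fr\'echet bound \cite[Theorem 3.1.2]{RaRu98} then gives $\E[c(1-U,f(U))]\le\E[c(1-U,F_\mu^{-1}(U))]$, which is exactly the claim. Your bathtub/Hardy--Littlewood route is more elementary and entirely self-contained, requiring no external reference; the paper's coupling argument is shorter and stays within the optimal-transport language of the article. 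Both are the same rearrangement principle viewed from different angles.
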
 \begin{proof}[Proof of Lemma \ref{lem:quantile Function increasing ordering}]	Let $v\in[0,1]$, $c:(x,y)\mapsto\mathbbm 1_{\{x<1-v\}}y$ and $U$ be a uniform random variable on $(0,1)$.
	Observe that for all $x,x',y,y'\in\R$ with $x\le x'$ and $y\le y'$ we have
	\[ \textstyle
	c(x',y')-c(x,y')-c(x',y)+c(x,y)\le 0.
	\]	
	Then by \cite[Theorem 3.1.2]{RaRu98} we have
	\[\textstyle
	\int_v^1f(u)\,du=\E[c(1-U,f(U))]\le\E[c(1-U,F_\mu^{-1}(U))]=\int_v^1F_\mu^{-1}(u)\,du.\qedhere
	\]	
\end{proof} \begin{proof}[Proof of Lemma \ref{lem:convex_affine}]
	Let $f, g \colon [0,b) \to \R$ be given by the right-hand derivative of $F$ and $G$ resp., that is $f := \partial_+ F$, $g := \partial_+ G$.
	Our first goal is to find an explicit representation of the convex hulls of $F$ and $G$.
	To this end, consider the infimum
	\begin{equation}\textstyle
		\label{eq:convex_affine.c_def}
		c := \inf \left\{ x\in[0,b) \mid f(x) \ge \frac{F(b)-F(x)}{b-x} \right\},
	\end{equation}
	which is well-defined and not greater than $a$ as $f(a)=\frac{F(b)-F(a)}{b-a}$.
	Moreover, the infimum in \eqref{eq:convex_affine.c_def} is attained by continuity of $F$ and right-continuity of $f$.
	When $c>0$, we get by \eqref{eq:convex_affine.c_def} and monotonicity of $f$ that $\sup_{x\in[0,c)}f(x)=\lim_{x \nearrow c} f(x) \le \lim_{x \nearrow c} \frac{F(b) - F(x)}{b - x}=\frac{F(b) - F(c)}{ b - c}$.
	Under the convention $\sup_{x\in[0,c)}f(x)=-\infty$ when $c=0$, we therefore find
	\begin{equation}\textstyle
		\label{eq:convex_affine.c_left_limit}
		\sup_{x\in[0,c)}f(x)\le \frac{F(b) - F(c)}{ b - c} =: \phi \le f(c).
	\end{equation}
	For $x \in [c,a]$, using \eqref{eq:convex_affine.c_left_limit} and the fact that $f$ is non-decreasing on $[0,a)$, we obtain 
	\begin{align}
		\nonumber
		\text{$\textstyle F(b) - F(x)$} &\text{$\textstyle=  F(b) - F(c) + F(c) - F(x) = (b - c)\phi - \int_c^x f(u) \, du$}
		\\
		\label{eq:convex_affine.c_computation}
		&\text{$\textstyle \le (b - c) \phi - (c - x) \phi = (b - x) \phi \le (b - x) f(x).$}
	\end{align}
	We claim that
	\begin{equation}
		\label{eq:convex_affine.co_pepresentation}
		\co(F)(x) = F(\min(x, c)) + \max(x - c, 0) \phi, \quad x \in [0,b]. 
	\end{equation}
	Denote the right-hand side of \eqref{eq:convex_affine.co_pepresentation} by $\tilde F$.
	Note that $\tilde F$ is convex since the right-hand derivative $\partial_+ \tilde F(x) = \mathbbm 1_{[0,c)} f(x) + \mathbbm 1_{[c,b)}\phi$ is non-decreasing by \eqref{eq:convex_affine.c_left_limit}.
	We calculate
	\[\textstyle
		F(x) = F(c) + F(x) - F(b) + F(b) - F(c) \ge F(c) - (b - x) \phi + (b - c) \phi= \tilde F(x),
	\]
	for $x \in [c,a]$, which yields that $\tilde F \le F$ on $[0,a] \cup \{ b \}$.
	Since both functions are affine on $[a,b]$ we conclude that $\tilde F \le F$ and therefore, by definition of the convex hull, $\tilde F \le \co(F)$.

	In order to show \eqref{eq:convex_affine.co_pepresentation}, it remains to verify that $\co(F)\le\tilde F  $.
	By convexity of $\co(F)$ and the inequality $\co(F)\le F$, we have, for $x \in [c,b]$,
	\[ \textstyle
		\co(F)(x) \le \frac{x - c}{b - c} F(b) + \frac{b - x}{b - c} F(c)= F(c) + (x - c) \phi  = \tilde F(x),
	\]
	and $\co(F)(x) \le F(x) = \tilde F(x)$ for $x \in [0,c]$, thus, $\co(F) \le \tilde F$.

	Reasoning the same way for $G$, we deduce that $d$ defined analogously to \eqref{eq:convex_affine.c_def}
	\[
		\textstyle d:= \inf \left\{ x \in [0,b) \colon g(x) \ge \frac{G(b) - G(x)}{b-x} \right\} 
	\]
	is not greater than $a$ and	has the properties
	\begin{gather}\textstyle
		\label{eq:convex_affine.d_left_limit}
		\sup_{x \in [0,d)} g(x) \le \frac{G(b) - G(d)}{b - d} =: \gamma \le g(d) 
                ,
		\\
		\label{eq:convex_affine.co_pepresentation2}
		\co(G)(x) = G(\min(x,d)) + \max(x - d, 0) \gamma.
	\end{gather}
	
	After this preparatory work we proceed to show the assertion, that is \eqref{eq:convex_affine.assertion}.
	Without loss of generality, we assume that $c \le d$. 
	Note that, by \eqref{eq:convex_affine.co_pepresentation} and \eqref{eq:convex_affine.c_left_limit},  $\partial_+ \co(F)(x) = \min( f(\min(x,c)), \phi)$ and, by \eqref{eq:convex_affine.co_pepresentation2} and \eqref{eq:convex_affine.d_left_limit}, $\partial_+ \co(G)(x) = \min( g(\min(x,d)), \gamma)$.
	Therefore, the left-hand side of \eqref{eq:convex_affine.assertion} coincides with
	\[\textstyle
		\int_0^c \theta \left( |f(u) - g(u)| \right) \, du + \int_c^d \theta \left(|\phi - g(u)| \right) \, du
		+ \int_d^b \theta \left( |\phi - \gamma| \right) \, du.
	\]
	We have to show
	\begin{equation}\textstyle
		\label{eq:convex_affine.toshow}
		\int_c^d\theta\left(|\phi - g(u)| \right) + \int_d^b \theta \left( |\phi - \gamma| \right) \, du \le
		\int_c^b \theta\left( |f(u) - g(u)| \right) \, du.
	\end{equation}

	\emph{Case 1}: Suppose that $\phi \ge \gamma$.
	By \eqref{eq:convex_affine.c_left_limit}, the monotonicity of $f$ on $[0,a)$ and \eqref{eq:convex_affine.d_left_limit},
	\[
		g(u) \le \gamma	\le \phi \le f(u)\quad u \in [c,d).
	\]
	Then, by applying Lemma \ref{lem:convex_est} (where in the notation of this lemma $x = \phi - \gamma$, $y = f(u) - \gamma$, $\hat y = \phi - g(u)$, $z = f(u) -  g(u)$), we find
	\begin{equation*}
		\label{eq:convex_affine.step1_1}
		\theta \left( f(u) - g(u) \right) - \theta \left( \phi - g(u) \right)\ge\theta \left( f(u) - \gamma \right) - \theta \left( \phi - \gamma \right),\quad u \in [c,d)
	\end{equation*}
\begin{equation*}
		\label{eq:convex_affine.step1_3}
		\mbox{so that }\int_c^d \theta \left( |f(u) - g(u) | \right) - \theta\left( \phi - g(u) \right) \, du 
		\ge
		\int_c^d \theta \left( f(u) - \gamma \right) - \theta(\phi - \gamma) \, du.
	\end{equation*}	Denoting by $\tilde \phi$ the fraction $\frac{F(b) - F(d)}{b - d}$, we have, since $\theta$ is increasing and convex, 
	\begin{equation*}\textstyle
		\label{eq:convex_affine.step1_2}
		\int_d^b \theta\left( |f(u) - g(u)| \right)	\, \frac{du}{b - d} \ge \theta \left(\left| \int_d^b \frac{f(u) - g(u)}{b - d} \, du \right| \right) = \theta\left(|\tilde \phi - \gamma|\right).
	\end{equation*}
Summing the two last inequalities, then using the convexity of $\theta$, we deduce that
	\begin{multline}\textstyle
		\label{eq:convex_affine.step1_4}
		\int_c^b \theta\left( |f(u) - g(u)| \right) \, du - \int_c^d  \theta(\phi - g(u)) \, du  - (b - d) \theta(\phi - \gamma)
		\\\textstyle
		\ge
		\int_c^d \theta(f(u) - \gamma) - \theta(\phi - \gamma) \, du + \int_d^b \theta(|\tilde \phi - \gamma|) - \theta(\phi - \gamma) \, du
		\\\textstyle
		\ge
		(b - c) \left( \theta \left( \frac{1}{b - c} \left( \int_c^d f(u) - \gamma \, du + \int_d^b |\tilde \phi - \gamma|du \right) \right) - \theta(\phi - \gamma) \right),
	\end{multline}
	Since $\theta$ is non-decreasing $\frac{1}{b - c}\left(\int_c^d f(u) - \gamma \, du + \int_d^b \tilde \phi - \gamma \, du\right)= \phi - \gamma$,
	we find that \eqref{eq:convex_affine.step1_4} is non-negative, from which we derive \eqref{eq:convex_affine.toshow}.

	\emph{Case 2}: Suppose that $\phi < \gamma$ and let $e := \inf\{u\in[c,d]\mid g(u)\ge\phi\}$ where by convention the infimum over the empty set is defined as $d$.
	By \eqref{eq:convex_affine.c_left_limit} and the monotonicity of $f$ on $[0,a)$, we have $g(u) \le \phi \le f(u)$ for $u \in [c,e)$, thus,
	\begin{equation}\textstyle
		\label{eq:convex_affine.step2_1}
		\int_c^e \theta(\phi - g(u)) \, du \le \int_c^e \theta(f(u) - g(u)) \, du.
	\end{equation}
	On the other hand, by \eqref{eq:convex_affine.co_pepresentation},
\begin{equation}\textstyle
   \forall x\in[c,b],\;F(x)\ge\co(F)(x)=F(b)+(x-b)\phi\label{eq:minoFcb}
\end{equation}
 so that $\tilde \phi = \frac{F(b) - F(d)}{b - d} \le \phi$.
	As $\theta$ is increasing and convex, we obtain
	\begin{equation}\textstyle
		\label{eq:convex_affine.step2_2}
		\int_d^b \theta\left( |f(u) - g(u)| \right) \, \frac{du}{b - d} \ge 
		\theta \left( \gamma - \tilde \phi \right).
	\end{equation}
	As consequence of \eqref{eq:convex_affine.step2_1} and \eqref{eq:convex_affine.step2_2}, the following inequality suffices to get \eqref{eq:convex_affine.toshow}:
	\begin{equation}\textstyle
		\label{eq:convex_affine.step2_3}
		\int_e^d \theta(g(u) - \phi) \, du + (b - d) \theta(\gamma - \phi ) \le \int_e^d \theta \left(|f(u) - g(u)|\right) \, du + (b - d) \theta(\gamma - \tilde \phi).
	\end{equation}
	Showing \eqref{eq:convex_affine.step2_3} is equivalent to proving that the respective images $\mu$ and $\nu$ of the Lebesgue measure $\lambda$ on $(0,1)$ by the non-decreasing maps
	\begin{align*}
		T^1(u) :=& 
		\begin{cases}
			g( e + (b - e) u ) - \phi & u < \frac{d - e}{b - e},
			\\
			\gamma - \phi & \text{else},
		\end{cases}
		\\
		T^2(u) :=&
		\begin{cases}
			|f(e + (b - e)u) - g(e + (b - e)u )| & u < \frac{d - e}{b - e},
			\\
			\gamma - \tilde \phi & \text{else}
		\end{cases}
	\end{align*}
	 are in the increasing convex order ($\le_{icx}$).
	By \cite[Theorem 4.A.3]{ShSh07}, this is equivalent to
	\begin{equation}\textstyle
		\label{eq:convex_affine.step2_4}
		\int_v^1 F_\mu^{-1}(u) \, du \leq \int_v^1 F_\nu^{-1}(u)\, du,\quad v \in [0,1].
	\end{equation}
	Since $T^1$ is non-decreasing, we have by \cite[Lemma A.3]{AlCoJo20} that $T^1(u) = F_\mu^{-1}(u)$ for $\lambda$-almost every $u \in (0,1)$.
	This observation combined with Lemma \ref{lem:quantile Function increasing ordering} leads to
	\begin{equation*}\textstyle
		\label{eq:convex_affine.step2_5}
		\int_v^1 F_\mu^{-1}(u) \, du = \int_v^1 T^1(u) \, du \mbox{ and }\int_v^1 F_\nu^{-1}(u) \, du \ge \int_v^1 T^2(u) \, du,\; v \in [0,1]
    \end{equation*}
    and \eqref{eq:convex_affine.step2_4} is implied by
    \begin{equation}\textstyle
        \int_v^1 T^1(u) \, du\le \int_v^1 T^2(u) \, du,\; v \in [0,1].
    \end{equation}              
	Recall that $\tilde \phi \le \phi$, so that this inequality holds for $v \in [\frac{d - e}{b - e},1]$.
	Next, abbreviate $\frac{d - e}{b - e} =: w$, let $v \in [0,w)$, and write $\hat e := e + (b - e)v$.
	We have by the triangle inequality that
	\begin{align*}
		\text{$\textstyle \int_v^1 T^1(u) \, du$}
		&\text{$\textstyle \le \int_v^w f(e + (b - e)u) + T^2(u) - \phi \, du + \int_w^1 \gamma - \phi \, du$}
		\\
		&\text{$\textstyle =\frac{F(d) - F(\hat e) - (d - \hat e) \phi + (b - d)(\gamma - \phi)}{b - e} + \int_v^w T^2(u) \, du.$}
	\end{align*}
	Remember that $(b - d) \tilde \phi = F(b) - F(d)$ and \eqref{eq:minoFcb} applies to $x=\hat e$ since $\hat e\ge e\ge c$.
	Thus,
	\[\textstyle	
		F(d) - F(\hat e) = F(b) - F(\hat e) - (b - d) \tilde \phi \le (b- \hat e) \phi - (b - d) \tilde \phi.
	\]
	We obtain
	\begin{align*}
		\text{$\textstyle \int_v^1 T^1(u) \, du$}&\text{$\textstyle
		\le \frac{(b - d)(\phi - \tilde \phi + \gamma - \phi)}{b - e} + \int_v^w T^2(u) \, du$}
		\\ \textstyle
		&\text{$\textstyle \le \frac{b - d}{b - e} (\gamma - \tilde \phi) + \int_v^w T^2(u) \, du = \int_v^1 T^2(u) \, du.$}\qedhere
	\end{align*}
\end{proof}
\bibliographystyle{abbrv}
\bibliography{joint_biblio}
\end{document}